\documentclass[a4paper,11pt]{amsart}
\usepackage{amsfonts,amsthm,amsmath,amssymb,graphicx}
\theoremstyle{plain}
\newtheorem{lem}{Lemma}[section]
\newtheorem{prop}[lem]{Proposition}
\newtheorem{thm}[lem]{Theorem}

\theoremstyle{definition}

\theoremstyle{remark}

\DeclareMathOperator{\rank}{rank}

\DeclareMathOperator{\sym}{sym}
\DeclareMathOperator{\spn}{span}

\DeclareMathOperator{\cond}{cond}

\newcommand{\Z}{\mathbb Z}

\newcommand{\A}{\mathbb A}
\newcommand{\F}{\mathbb F}
\newcommand{\E}{\mathbb E}
\newcommand{\R}{\mathbb R}
\newcommand{\C}{\mathbb C}
\newcommand{\stufe}{\mathcal N }
\newcommand{\G}{\mathcal G}
\newcommand{\Eis}{\mathcal E}
\newcommand{\h}{\mathcal H}
\newcommand{\M}{\mathcal M}

\begin{document}

\title[Degree 2 Eisenstein series]{Hecke eigenvalues and relations for degree 2 Siegel Eisenstein series}


\author{Lynne H. Walling}
\address{School of Mathematics, University of Bristol, University Walk, Clifton, Bristol BS8 1TW, United Kingdom}
\email{l.walling@bristol.ac.uk}





\begin{abstract} We evaluate the action of Hecke operators on Siegel Eisenstein series of degree 2,
square-free level $\stufe$ and arbitrary character
$\chi$, without using knowledge of their Fourier coefficients.  From this
we construct a basis of simultaneous eigenforms for the full Hecke algebra, and we 
compute their eigenvalues.  As well, we obtain
Hecke relations among the Eisenstein series.  Using these Hecke relations
in the case that $\chi=1$, we generate
a basis for the space of Eisenstein series from $\E_{(\stufe,1,1)}.$
\end{abstract}

\maketitle
\def\thefootnote{}
\footnote{2010 {\it Mathematics Subject Classification}: Primary
11F41 }
\def\thefootnote{\arabic{footnote}}

\section{Introduction} 
Modular forms are of central interest in number theory, particularly
because Hecke theory tells us that their Fourier coefficients carry
number theoretic information.
Eisenstein series are fundamental examples of modular forms
and play an important
role in the theory of modular forms.  
In the case of elliptic modular forms (i.e. Siegel degree 1), the
Eisenstein series are well-understood; for instance, we have explicit
formulas for their Fourier coefficients, we know that the space of
Eisenstein series can be simultaneously diagonalised with respect to
the Hecke operators attached to primes not dividing the level, and when the
level is square-free, the space can be simultaneously diagonalised with
respect to the full Hecke algebra.  In the case of Siegel degree $n>1$,
the situation is less well understood, but some parallel results have
been established.
Freitag \cite{F} has shown that the space of Eisenstein series can be simultaneously
diagonalised with respect to the Hecke operators attached to primes not
dividing the level.  Many authors have worked on computing Fourier coefficients
of Siegel Eisenstein series with $n>1$.  We do not try to give a comprehensive
list of all the work that has contributed to this, but rather give a sampling.
For level 1, see \cite{Maass1},\cite{Maass2} for degree 2; \cite{Kat1} for degree 3;
\cite{B},\cite{CK},\cite{Kat2},\cite{Koh} for arbitrary degree.
For degree 2, level $\stufe$ and primitive character modulo $\stufe$,
Fourier coefficients for 1 of the Eisenstein series
$\E_{(\stufe,1,1)}$ have been computed
in \cite{Miz} when $\stufe$ is odd and square-free, and in \cite{Tak} for arbitrary $\stufe$.

In this work, without using any knowledge of Fourier coefficients,
we evaluate the
action of Hecke operators on a basis for the space of degree 2 Siegel Eisenstein series of square-free
level $\stufe$ and arbitrary character $\chi$ (Propositions 3.3-3.10). 
The evaluation of Hecke operators reveals
 Hecke relations among these Siegel Eisenstein series.  Using these,
we construct a basis for this space consisting of simultaneous
eigenforms for the full Hecke algebra, and we compute their eigenvalues (Theorem 3.11).
In the case that $\cond\chi^2<\stufe$, these Hecke relations allow us to
generate some of the other Eisenstein series from $\E_{(\stufe,1,1)}$
by applying particular elements of the Hecke algebra;
in particular, when $\chi=1$, we can generate a basis from $\E_{(\stufe,1,1)}$
(Theorem 3.12).  In the remark following this theorem, we briefly discuss
how we can use \cite{HW} and the Fourier coefficients of the degree 2, level 1 Eisenstein series $\E$
to generate the Fourier coefficients of all the 
degree 2, level $\stufe$ Eisenstein series
in the case that $\stufe$ is square-free and the character $\chi=1$.

\section{Preliminaries}
Here we set notation and define degree 2 Siegel Eisenstein series and
Hecke operators.  We begin by fixing square-free $\stufe\in\Z_+$.
With $Sp_2(\Z)$ the group of $4\times 4$ integral symplectic
matrices, we set
$$\Gamma_{\infty}=\left\{\left(\begin{array}{cc} *&*\\0&*\end{array}\right)\in
Sp_2(\Z)\right\},$$
$$\Gamma_0(\stufe)=\left\{\gamma\in Sp_2(\Z):\ 
\gamma\equiv\left(\begin{array}{cc} *&*\\0&*\end{array}\right)\ (\stufe)\ \right\}.$$
The 0-dimensional cusps for $\Gamma_0(\stufe)$ correspond to the elements of the double coset
$\Gamma_{\infty}\backslash Sp_2(\Z)/\Gamma_0(\stufe)$.
For $k\in\Z_+$ and $\chi$ a Dirichlet character modulo $\stufe$, we have one
Siegel Eisenstein series for each cusp, defined as follows.
For $\gamma_0\in Sp_2(\Z)$, the Eisenstein series associated to the 
cusp $\Gamma_{\infty}\gamma_0\Gamma_0(\stufe)$ is 
$$\E_{\gamma_0}(\tau)=\sum\overline\chi(\det D_{\gamma})\,1|\gamma(\tau)$$
where $\Gamma_{\infty}\gamma$ varies over the $\Gamma_0(\stufe)$-orbit of
$\Gamma_{\infty}\gamma_0$, 
$$\tau\in\h_{(2)}=\left\{X+iY:\ X,Y\in\R^{2,2}_{\sym},\ Y>0\ \right\}$$
where $Y>0$ denotes that $Y$ is the matrix for a positive definite quadratic form,
and
$$1|\left(\begin{array}{cc} A&B\\C&D\end{array}\right)(\tau)=\det(C\tau+D)^{-k}.$$
This sum is well-defined provided $\chi_q^2=1$ whenever $q$ is a prime dividing $\stufe$ and
$\rank_q M_0=1$ where
$\gamma_0=\left(\begin{array}{cc} *&*\\M_0&N_0\end{array}\right)$ 
and $\rank_qM_0$ denotes the rank of $M_0$ modulo $q$.
When well-defined, the sum is non-zero provided $\chi(-1)=(-1)^k$, and it is
 absolutely uniformly convergent on compact regions provided $k\ge 4$
(and hence it is analytic, meaning analytic in each variable of $\tau$).
For $\gamma'\in\Gamma_0(\stufe)$, $\Gamma_{\infty}\gamma\gamma'$ varies
over the $\Gamma_0(\stufe)$-orbit of $\Gamma_{\infty}\gamma_0$ as
$\Gamma_{\infty}\gamma$ does, and hence
$\E_{\gamma_0}|\gamma'=\chi(\det D_{\gamma'})\,\E_{\gamma_0}.$
As noted in [3], these Eisenstein series are linearly independent,
and the 0th Fourier coefficient of $\E_{\gamma_0}$ is 0 unless $\gamma_0\in\Gamma_0(\stufe)$,
in which case it is 1.

A pair of $2\times 2$ matrices $(M\ N)$ is 
called symmetric if $M\,^tN=N\,^tM$ with $^tN$ denoting the transpose of $N$;
it is called a coprime pair if $M,N$ are integral and $(GM\ GN)$ 
is integral only if $G$ is.  Note that $(M\ N)$ is a coprime pair if and only if,
for each prime $p$, $\rank_p(M\ N)=2$.
It is well-known that for $\gamma,\gamma'\in Sp_2(\Z)$, $\gamma$ and $\gamma'$ lie
in the same coset in $\Gamma_{\infty}\backslash Sp_2(\Z)$ if and only if
$\gamma=\left(\begin{array}{cc} *&*\\M&N\end{array}\right)$, $\gamma'=\left(\begin{array}{cc} *&*\\GM&GN\end{array}\right)$
for some $G\in GL_2(\Z)$.  Thus these cosets can be parameterised by
$GL_2(\Z)$-equivalence classes of coprime symmetric pairs; so
$\E_{\gamma_0}$ is supported on a set of $GL_2(\Z)$-equivalence
class representatives for the
$\Gamma_0(\stufe)$-orbit of $GL_2(\Z)(M_0\ N_0)$.

For each prime $p$, we have Hecke operators $T(p)$ and $T_1(p^2)$ that
act on degree 2 Siegel modular forms, and
$\{T(p),T_1(p^2):\ p\text{ prime }\}$ generates the Hecke algebra.
For $f$ a degree 2 Siegel modular form of weight $k$, level $\stufe$, and
character $\chi$, and for $\gamma'=\left(\begin{array}{cc} A&B\\C&D\end{array}\right)$, we set
$$\gamma'\circ\tau=(A\tau+B)(C\tau+D)^{-1}$$
and
$$f(\tau)|\gamma'=(\det\gamma')^{k/2}\det(C\tau+D)^{-k}f(\gamma'\circ\tau).$$  Then
$$f|T(p)=p^{k-3}\sum_{\gamma}\overline\chi(\det D_{\gamma})f|\delta^{-1}\gamma$$
where $\delta=\left(\begin{array}{cc} pI_2\\&I_2\end{array}\right)$ and $\gamma$ varies over a set
of coset representatives for
$$\left(\delta\Gamma_0(\stufe)\delta^{-1}\cap\Gamma_0(\stufe)\right)\backslash\Gamma_0(\stufe).$$
Somewhat similarly,
$$f|T_1(p^2)=p^{k-3}\sum_{\gamma}\overline\chi(\det D_{\gamma})f|\delta_1^{-1}\gamma$$
where $\delta_1=\left(\begin{array}{cc} X\\&X^{-1}\end{array}\right)$, $X=\left(\begin{array}{cc} p\\&1\end{array}\right)$, and $\gamma$
varies over a set of coset representatives for
$$\left(\delta_1\Gamma_0(\stufe)\delta_1^{-1}\cap\Gamma_0(\stufe)\right)
\backslash\Gamma_0(\stufe).$$
In Propositions 2.1 and 3.1 of [4] we computed an explicit set of upper triangular block matrices giving
the action of the Hecke operators, and we will use these here in evaluating
the action of Hecke operators on Eisenstein.  (Note that
in [4] we did not introduce the normalisation of $T_1(p^2)$ until we averaged
the Hecke operators to produce an alternative basis for the Hecke algebra.)

Given $Q\in\Z^{2,2}_{\sym}$ and $\F=\Z/p\Z$, $p$ prime, we can think of $Q$ as
a quadratic form on $V=\F x_1\oplus\F x_2$.  We say a non-zero vector $v\in V$
is isotropic if $Q(v)=0$ (in $\F$).  Suppose $p$ is odd.  Then $Q$ is a
$GL_2(\F)$ conjugate of $\mathbb H=\big<1,-1\big>$ or of $\A=\big<1,-\omega\big>$
where $\left(\frac{\omega}{p}\right)=-1$ and $\big<*,*\big>$ denotes a
diagonal matrix; 
we write $V\simeq\mathbb H$ or $V\simeq\A$
accordingly.  Note that when $V\simeq\mathbb H$, $V$ contains 2 isotropic lines, and
when $V\simeq\A$, $V$ contains no isotropic lines.  Now suppose $p=2$; then either
$Q$ is a $GL_2(\F)$ conjugate of $I$ or (over $\F$) $Q=\left(\begin{array}{cc} 0&1\\1&0\end{array}\right)$
(which is stabilised under conjugation by $GL_2(\F)$).
When $V\simeq I$, $V$ contains 1 isotropic line; when $V\simeq\left(\begin{array}{cc} 0&1\\1&0\end{array}\right)$,
all 3 lines in $V$ are isotropic.

\section{Action of Hecke operators on Eisenstein series of square-free level}
Throughout this section, we assume $k\in\Z_+$ with $k\ge 4$.

For $\stufe$ square-free, it is not hard to show that the $GL_2(\Z)$-equivalence classes of
two coprime symmetric pairs $(M\ N)$, $(M'\ N')$ are in the same $\Gamma_0(\stufe)$-orbit
if and only if $\rank_qM=\rank_qM'$ for all primes $q|\stufe$.  Thus we can identify
each $\Gamma_0(\stufe)$ cusp with a (multiplicative) 
partition $\rho=(\stufe_0,\stufe_1,\stufe_2)$  where
$\stufe_0\stufe_1\stufe_2=\stufe$:  Take $M_{\rho}$ to be a diagonal matrix so that for each
prime $q|\stufe$, modulo $q$ we have
$$M_{\rho}\equiv\begin{cases} 0&\text{if $q|\stufe_0$,}\\
\left(\begin{array}{cc} 1\\&0\end{array}\right)&\text{if $q|\stufe_1$,}\\
I&\text{if $q|\stufe_2$.}
\end{cases}$$
Then $\rho$ corresponds to the cusp $\Gamma_{\infty}\gamma_{\rho}\Gamma_0(\stufe)$
where $\gamma_{\rho}=\left(\begin{array}{cc} I&0\\M_{\rho}&I\end{array}\right)$.
Note that
$$GL_2(\Z)(M_{\rho}\ I)
=SL_2(\Z)(M_{\rho}\ I)\cup SL_2(\Z)(M_{\rho}\ I)\left(\begin{array}{cccc} -1\\&1\\&&-1\\&&&1\end{array}\right),$$
so we can identify the cusp with $SL_2(\Z)(M_{\rho}\ I)\Gamma_0(\stufe)$.
We use $\E_{\rho}$ to denote $\E_{\gamma_{\rho}}$.  To ease
the discussions during our computations
we consider $2\E_{\rho}$ to be supported on
a set of representatives for the
$SL_2(\Z)$-equivalence classes in the $\Gamma_0(\stufe)$-orbit of $(M_{\rho}\ I)$.

For $q$ prime, we say $(M\ N)$ has $q$-type $i$ if $(M\ N)$ is a coprime symmetric pair with
$\rank_qM=i$.  
For $(M\ N)$ of $q$-type $i$, choose $E\in GL_2(\Z)$ so that $q$ divides the lower
$2-i$ rows of $EM$; then we say $(M\ N)$ (or simply $M$) is $q^2$-type $i,j$
where $j=\rank_q\left(\begin{array}{cc} I_i\\&\frac{1}{q}I_{2-i}\end{array}\right) EM$.
Given square-free $\stufe$ and a partition $\rho=(\stufe_0,\stufe_1,\stufe_2)$ of 
$\stufe$, we say $(M\ N)$ has $\stufe$-type $\rho$ if
$(M\ N)$ is a coprime symmetric pair and, for each prime $q|\stufe_i$,
$\rank_qM=i$.

Given a character $\chi$ modulo $\stufe$, and $(M\ N)=(M_{\rho}\ I)\gamma$ where
$\gamma=\left(\begin{array}{cc} A&B\\C&D\end{array}\right)\in\Gamma_0(\stufe)$, we can describe $\chi(\det D)$
in terms of $M,N,\rho$ as follows.  
For each prime $q|\stufe_0$, we have $N\equiv D\ (q)$, so $\chi_q(\det D)=\chi_q(\det N)$.
For each prime $q|\stufe_2$, we have $M\equiv A\equiv\,^t\overline D\ (q)$,
so $\chi_q(\det D)=\overline\chi_q(\det M)$.  Now take a prime $q|\stufe_1$; write
$D=\left(\begin{array}{cc} d_1&d_2\\d_3&d_4\end{array}\right)$.  Thus modulo $q$ we have
$$A\equiv\overline{\det D}
\left(\begin{array}{cc} d_4&-d_3\\-d_2&d_1\end{array}\right),$$
so consequently modulo $q$ we have
$$ M\equiv\overline{\det D}
\left(\begin{array}{cc} d_4&-d_3\\0&0\end{array}\right),\ N\equiv\left(\begin{array}{cc} *&*\\d_3&d_4\end{array}\right).$$
We know $q\nmid(d_3\ d_4)$, so $\chi_q(\det D)=\overline\chi_q(m_1)\chi_q(n_4)$ or
$\overline\chi_q(-m_2)\chi_q(n_3)$, whichever is non-zero.  Take $E\in SL_2(\Z)$
so that $q$ divides row 2 of $EM$;
thus $E\equiv\left(\begin{array}{cc}\alpha&\beta\\0&\overline\alpha\end{array}\right)\ (q)$,
and with 
$$EM=\left(\begin{array}{cc} m_1'&m_2'\\qm_3'&qm_4'\end{array}\right),\ 
EN=\left(\begin{array}{cc} n_1'&n_2'\\n_3'&n_4'\end{array}\right),$$ we have
$$\overline\chi_q(m_1')\chi_q(n_4')=\overline\chi_q(m_1)\chi_q(n_4),\ 
\overline\chi_q(-m_2')\chi_q(n_3')=\overline\chi_q(-m_2)\chi_q(n_3)$$
provided $\chi_q^2=1$.  So when $\chi_q^2=1$ and $(M\ N)$ has $q$-type 1,
we can choose $E\in SL_2(\Z)$ so that $EM=\left(\begin{array}{cc} m_1&m_2\\qm_3&qm_4\end{array}\right)$,
$EN=\left(\begin{array}{cc} n_1&n_2\\n_3&n_4\end{array}\right)$; set
$\chi_{(1,q,1)}(M,N)=\chi_q(m_1)\chi_q(n_4)$ or
$\chi_q(-m_2)\chi_q(n_3)$ (whichever is non-zero).
Then set
$$\chi_{\rho}(M,N)
=\prod_{q|\stufe_0}\chi_q(\det N)\prod_{q|\stufe_1}\chi_{(1,q,1)}(M,N)\prod_{q|\stufe_2}\overline\chi_q(\det M).$$
Hence $2\E_{\rho}(\tau)=\sum\overline\chi_{\rho}(M,N)\det(M\tau+N)^{-k}$ where 
$(M\ N)$ varies over a set of $SL_2(\Z)$-equivalence class representatives for pairs
of $\stufe$-type $\rho$.
Also note that for $G\in SL_2(\Z)$, $\chi_{\rho}(GM,GN)=\chi_{\rho}(M,N)$, and since
$\left(\begin{array}{cc} G\\&^tG^{-1}\end{array}\right)\in Sp_2(\Z)$, 
we have $\chi_{\rho}(MG,N\,^tG^{-1})=\chi_{\rho}(M,N)$.

For the remainder of this section, fix a partition $\rho=(\stufe_0,\stufe_1,\stufe_2)$ 
of $\stufe$, and fix a character
$\chi$ modulo $\stufe$.  We decompose $\chi$ as $\chi_{\stufe_0}\chi_{\stufe_1}\chi_{\stufe_2}$
where $\chi_{\stufe_i}$ has modulus $\stufe_i$; we assume $\chi$ has been chosen so that
$\chi_{\stufe_1}^2=1$.
For $p$ prime, let $\G_1(p)$ be a set of representatives for 
$$\left\{\gamma\in GL_2(\Z):\ \gamma\equiv\left(\begin{array}{cc} *&0\\ *&*\end{array}\right)\ (q)\ \right\}
\backslash SL_2(\Z);$$ note that for $p\nmid \stufe$, we can take the elements in
$\G_1(p)$ to be congruent modulo $\stufe$ to $I$.

When evaluating the action of the Hecke operators,
we often use the following simple lemmas.

\begin{lem}\label{lemma 3.1} Say $p$ is a prime and $(M\ N)$ is $p$-type 1,
$M=\left(\begin{array}{cc} m_1&m_2\\pm_3&pm_4\end{array}\right)$, 
and $N=\left(\begin{array}{cc} n_1&n_2\\n_3&n_4\end{array}\right)$.
Then $p|m_1$ if and only if $p|n_4$, and $p|m_2$ if and only if $p|n_3$.
\end{lem}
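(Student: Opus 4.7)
The plan is to extract a single congruence modulo $p$ from the symmetry relation $M\,^tN=N\,^tM$ and then combine it with the non-vanishing constraints coming from ``$p$-type $1$'' and ``coprime pair'' to read off both biconditionals simultaneously.

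First I would compute the $(1,2)$ entries of both sides of $M\,^tN=N\,^tM$. With $M$ and $N$ as in the statement, the $(1,2)$ entry of $M\,^tN$ is $m_1n_3+m_2n_4$, while the $(1,2)$ entry of $N\,^tM$ is $p(n_1m_3+n_2m_4)$. So the symmetry condition yields
$$m_1n_3+m_2n_4\equiv 0\pmod{p}.$$

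Next I would record two non-vanishing statements modulo $p$. Since $M$ has $p$-rank $1$, its top row cannot vanish mod $p$, i.e.\ $(m_1,m_2)\not\equiv(0,0)\ (p)$ --- otherwise $M\equiv 0\ (p)$ and $(M\ N)$ would have $p$-type $0$. Since $(M\ N)$ is a coprime pair, $\rank_p(M\ N)=2$; but the bottom row of $M$ is already zero mod $p$, so the bottom row of $N$ must be nonzero mod $p$, giving $(n_3,n_4)\not\equiv(0,0)\ (p)$.

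The two biconditionals then follow directly from the congruence above. If $p\mid m_1$, the congruence forces $m_2n_4\equiv 0\ (p)$, and $p\nmid m_2$ (by the first non-vanishing) yields $p\mid n_4$; conversely, if $p\mid n_4$, then $m_1n_3\equiv 0\ (p)$, and $p\nmid n_3$ (by the second non-vanishing) yields $p\mid m_1$. The same reasoning with the pairs $(m_1,n_4)$ and $(m_2,n_3)$ interchanged gives $p\mid m_2\Longleftrightarrow p\mid n_3$. There is no real obstacle here: the only subtlety is that the argument genuinely needs \emph{both} the $p$-type $1$ hypothesis and the coprimality of $(M\ N)$, since each is responsible for exactly one of the two non-vanishing statements that make the congruence usable.
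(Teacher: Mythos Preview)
Your proof is correct and follows the same approach as the paper: the key congruence $m_1n_3+m_2n_4\equiv 0\pmod p$ obtained from the symmetry $M\,^tN=N\,^tM$, combined with $(m_1,m_2)\not\equiv(0,0)$ from $\rank_pM=1$ and $(n_3,n_4)\not\equiv(0,0)$ from coprimality. The paper's proof is simply a terser version of yours, doing one direction explicitly and declaring the rest ``essentially identical''.
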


\begin{proof}  Say $p|m_1$.  Then $p\nmid m_2$ since $\rank_pM=1$; by symmetry,
$p|m_2n_4$ and hence $p|n_4$.  The other arguments needed to prove the lemma are
essentially identical to this. 
\end{proof}

\begin{lem}\label{lemma 3.2} Let $V=\F x_1\oplus\F x_2$ where $\F=\Z/p\Z$,
$p$ prime. For $G\in \G_1(p)$, let $(x'_1\ x'_2)=(x_1\ x_2)\,^tG$.
Then as $G$ varies over $\G_1(p)$, $\F x'_1$ varies over all lines in $V$.
\end{lem}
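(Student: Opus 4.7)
The plan is to show that $\phi\colon \G_1(p)\to\{\text{lines in }V\}$ defined by $G\mapsto \F x_1'$ is a bijection. Writing $G=\left(\begin{smallmatrix}a&b\\c&d\end{smallmatrix}\right)$, the definition of $(x_1'\ x_2')=(x_1\ x_2)\,^tG$ gives $x_1' = ax_1+bx_2$. Since $G\in SL_2(\Z)$ reduces to an element of $SL_2(\F)$, its first row $(a,b)$ is nonzero modulo $p$, so $\F x_1'$ is a genuine line in $V$.

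First I would check that $\phi$ is well-defined on cosets. If $G' = HG$ with $H\in SL_2(\Z)$ and $H\equiv\left(\begin{smallmatrix}u&0\\ *&*\end{smallmatrix}\right)\pmod p$, then row~1 of $G'$ is congruent to $u$ times row~1 of $G$ mod $p$, so $x_1''\equiv ux_1'\pmod p$ and $\F x_1''=\F x_1'$. For injectivity, suppose $\F x_1' = \F x_1''$; then row~1 of $G'$ is a nonzero scalar multiple of row~1 of $G$ mod $p$. Writing $G^{-1}\equiv\left(\begin{smallmatrix}d&-b\\ -c&a\end{smallmatrix}\right)\pmod p$ (valid since $\det G = 1$) and computing $G'G^{-1}$ mod $p$, the $(1,2)$-entry is seen to vanish, so $G$ and $G'$ represent the same coset.

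A cardinality count then completes the proof: the set of lines in $V$ has $p+1$ elements; the lower-triangular subgroup of $SL_2(\F)$ has order $p(p-1)$, and since $|SL_2(\F)|=p(p-1)(p+1)$ and the reduction map $SL_2(\Z)\to SL_2(\F)$ is surjective, $|\G_1(p)|=p+1$. The injection $\phi$ is therefore a bijection. There is no real obstacle here --- the lemma is essentially the standard identification of the Borel coset space $B^-\backslash SL_2(\F)$ with $\mathbb P^1(\F)$ --- and the only thing to handle carefully is the transpose in $(x_1\ x_2)\,^tG$, so that $x_1'$ is read off the first row (rather than the first column) of $G$.
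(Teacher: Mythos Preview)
Your proof is correct. The paper takes a more direct route: it exhibits explicit representatives for $\G_1(p)$, namely $\left(\begin{smallmatrix}0&-1\\1&0\end{smallmatrix}\right)$ and $\left(\begin{smallmatrix}1&\alpha\\0&1\end{smallmatrix}\right)$ with $\alpha$ varying modulo $p$, and simply observes that the corresponding $x_1'$ are $-x_2$ and $x_1+\alpha x_2$, which visibly give all $p+1$ lines. Your argument is more conceptual---you show the assignment $G\mapsto\F x_1'$ descends to a well-defined injection on cosets and then invoke the cardinality match $|\G_1(p)|=p+1$---and in effect restates the standard identification of $B^-\backslash SL_2(\F)$ with $\mathbb P^1(\F)$. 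The paper's version has the practical advantage of fixing concrete representatives that are reused in the later Hecke computations, while yours makes explicit that the conclusion is independent of any particular choice of representative set.
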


\begin{proof} 
Representatives for $\G_1(p)$ are $\left(\begin{array}{cc} 0&-1\\1&0\end{array}\right),$
$\left(\begin{array}{cc} 1&\alpha\\0&1\end{array}\right)$ where $\alpha$ varies modulo $p$.  Thus $\F x'_1$
varies as claimed.

\end{proof}

\begin{prop}\label{proposition 3.3}
For $p$ a prime not dividing $\stufe$,
we have
$$\E_{\rho}|T(p)
=(\chi_{\stufe_0}(p^2)\chi_{\stufe_1}(p)p^{2k-3}
+\chi_{\stufe_0\stufe_2}(p)p^{k-2}(p+1)
+\chi_{\stufe_1}(p)\chi_{\stufe_2}(p^2))
\E_{\rho}.$$
\end{prop}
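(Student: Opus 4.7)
The plan is to expand $\E_\rho|T(p)$ by substituting
$$2\E_\rho(\tau) = \sum \overline\chi_\rho(M,N)\det(M\tau+N)^{-k}$$
(the sum over $SL_2(\Z)$-equivalence classes of coprime symmetric pairs of $\stufe$-type $\rho$) into the Hecke operator formula, using the explicit upper triangular coset representatives from [4, Prop 2.1]. Since $p\nmid\stufe$, we may take the representatives $\gamma = \left(\begin{smallmatrix}A&B\\0&D\end{smallmatrix}\right)\equiv I\pmod\stufe$, so that $\overline\chi(\det D_\gamma)=1$ and the only character contribution is $\overline\chi_\rho(M,N)$. A direct slash-action computation yields
$$\det(M\tau+N)^{-k}\bigm|\delta^{-1}\gamma \;=\; p^{k}\det(M_0\tau+N_0)^{-k},$$
where $(M_0\ N_0) := (M\ N)\left(\begin{smallmatrix}A&B\\0&pD\end{smallmatrix}\right)$. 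Using the symplectic relations $A\,^tD=I$ and $A\,^tB$ symmetric, the pair $(M_0\ N_0)$ is symmetric and right $GL_4(\Z)$-equivalent to the simpler pair $(M\ pN)$.

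Next I would extract the primitive part, writing $(M_0\ N_0) = G(M''\ N'')$ with $G\in M_2(\Z)$ and $(M''\ N'')$ coprime symmetric; then $\det(M_0\tau+N_0)^{-k} = (\det G)^{-k}\det(M''\tau+N'')^{-k}$. An elementary-divisor analysis of $(M\ pN)$ shows that $\det G\in\{1,p,p^2\}$, with the three cases corresponding exactly to the three possible $p$-types of $(M,N)$: $p$-type $2$ gives $G=I$; $p$-type $1$ gives $G$ with elementary divisors $(1,p)$; and $p$-type $0$ gives $G=pI$ (available only when $p\nmid\det N''$ for the primitive pair). Since $p\nmid\stufe$, the $\stufe$-type $\rho$ is preserved in passing from $(M,N)$ to $(M'',N'')$.

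Finally I would reindex the double sum over the target $(M''\ N'')$: for each such target, count the triples $\bigl([(M,N)],\gamma\bigr)$ producing it, weighted by $\overline\chi_\rho(M,N)(\det G)^{-k}$, and combine with the overall prefactor $p^{2k-3}$ (from $p^{k-3}$ in the Hecke definition and $p^{k}$ from the slash action). Lemma \ref{lemma 3.2} gives the $p+1$ arising in the middle term (parameterising the lines in $\F_p^2$ associated to the $p$-type $1$ case), and Lemma \ref{lemma 3.1} is used to track rank-$1$ structure at primes $q|\stufe_1$. The hardest step is the character bookkeeping: one must express $\overline\chi_\rho(M,N)$ in terms of $\overline\chi_\rho(M'',N'')$, picking up a case-dependent factor $\chi_{\stufe_0}(p^a)\chi_{\stufe_1}(p^b)\chi_{\stufe_2}(p^c)$. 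This is done by splitting $\chi_\rho$ across the primes $q|\stufe$ and tracking how $\det N$ (for $q|\stufe_0$), $\det M$ (for $q|\stufe_2$) and the auxiliary quantity $\chi_{(1,q,1)}(M,N)$ (for $q|\stufe_1$) transform under $(M,N)\leftrightarrow G^{-1}(M,N)\alpha_\gamma$ at each such $q$. When the geometric multiplicities are combined with these character factors, the three contributions yield precisely $\chi_{\stufe_0}(p^2)\chi_{\stufe_1}(p)p^{2k-3}$, $\chi_{\stufe_0\stufe_2}(p)p^{k-2}(p+1)$ and $\chi_{\stufe_1}(p)\chi_{\stufe_2}(p^2)$, summing to the claimed eigenvalue.
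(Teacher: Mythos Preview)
Your overall strategy---expand the Eisenstein series, apply the Hecke operator via explicit coset representatives, extract the coprime part of each resulting pair, and reindex by the target---is exactly what the paper does. The gap is in your central simplification. You assume the representatives $\gamma\in\Gamma_0(\stufe)$ for $(\delta\Gamma_0(\stufe)\delta^{-1}\cap\Gamma_0(\stufe))\backslash\Gamma_0(\stufe)$ can all be taken block upper triangular in $Sp_2(\Z)$ (so that $A\,^tD=I$), and from this you deduce that every intermediate pair $(M_0\ N_0)$ is right $GL_4(\Z)$-equivalent to $(M\ pN)$, hence that $\det G$ depends only on the $p$-type of the source $(M,N)$. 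But those coset representatives cannot all be upper triangular: modulo $p$ the subgroup $\delta\Gamma_0(\stufe)\delta^{-1}\cap\Gamma_0(\stufe)$ is the opposite Siegel parabolic $\{B\equiv 0\}$, and the Siegel parabolic $\{C=0\}$ meets only $p^3$ of the $(p+1)(p^2+1)$ cosets. What \emph{is} upper triangular is the set of $GSp$-representatives $\alpha$ for $\Gamma\backslash\Gamma\delta\Gamma$ (this is what [4, Prop.~3.1] provides, and what the paper uses); for these one has $A\,^tD=pI$, not $I$, and your right-equivalence argument collapses.

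Concretely, take $(M,N)=(0,I)$, source $p$-type $0$. With the representative $\alpha=\left(\begin{smallmatrix}pI&0\\0&I\end{smallmatrix}\right)$ the new pair is $(0\ I)$ with $\det G=1$, while with $\alpha=\left(\begin{smallmatrix}I&0\\0&pI\end{smallmatrix}\right)$ the new pair is $(0\ pI)$ with $\det G=p^2$. So $\det G$ genuinely depends on the coset, and your proposed trichotomy by source $p$-type does not organize the sum. The paper handles this by splitting according to the coset parameter $r\in\{0,1,2\}$ and, within each $r$, according to the value $\ell$ for which $D_\ell^{-1}(\text{intermediate pair})$ is coprime; it is this $\ell$ (not the source $p$-type) that controls both the factor $(\det G)^{-k}=p^{-(2-\ell)k}$ and the character shift, and the three eigenvalue terms correspond to $\ell=2,1,0$. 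The bookkeeping then requires the nine subcases the paper works through; your plan would need to be reorganized along these lines. (Minor point: the relevant reference for $T(p)$ is Prop.~3.1 of [4], not Prop.~2.1.)
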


\begin{proof}
Decompose $\E_{\rho}$ as $\E_0+\E_1+\E_2$ where $\E_i$ is supported on pairs of $\stufe$-type $\rho$
and $p$-type $i$.

Using the matrices for $T(p)$ as described in Propostion 3.1 of [4], we can describe
$\E_{\rho}|T(p)$ as follows.  First,
let $\G_1=\G_1(p).$
Then
\begin{align*}
&2\E_{\rho}(\tau)|T(p)=\\
& p^{2k-3}\sum_{{0\le r\le 2}\atop{G_r,Y_r}}\chi(p^{2-r})\overline\chi_{\rho}(M,N)
\det(MD_rG_r^{-1}\tau+pND_r^{-1}\,^tG_r+MY_r\,^tG_r)^{-k}
\end{align*}
where $(M\ N)$ varies over a set of $SL_2(\Z)$-equivalence class representatives
for pairs of $\stufe$-type $\rho$,
$D_r=\left(\begin{array}{cc} I_r\\&pI_{2-r}\end{array}\right)$,
$G_0=G_2=I$, $G_1$ varies over $\G_1$, $Y_0=0$,
$Y_1=\left(\begin{array}{cc} y\\&0\end{array}\right)$ with $y$ varying modulo $p$,
and $Y_2\in\Z^{2,2}_{\sym}$ varying modulo $p$.
For convenience, we choose $Y_i\equiv0\ (\stufe).$

{\bf Case I:} Say $r=0$.  We take $(M'\ N')=D_{\ell}^{-1}(pM\ N)$ where
$\ell=\rank_pN$, and the $SL_2(\Z)$-equivalence class representative $(M\ N)$
is chosen so that $p$ divides the lower $2-\ell$ rows of $N$.

First suppose $\ell=2$.  Thus $\rank_pM'=0$, and 
$$\chi(p^2)\chi_{\rho}(M',N')=\chi_{\stufe_0}(p^2)\chi_{\stufe_1}(p)\chi_{\rho}(M,N).$$
So the contribution from these terms is $\chi_{\stufe_0}(p^2)\chi_{\stufe_1}(p) p^{2k-3}\E_0$.

Next suppose $\ell=1$.  So $p$ does not divide row 2 of $M$, and hence
$\rank_pM'=1$ with $p$ dividing row 1 of $M'$, $p$ not dividing row 1 of $N'$
(and so $(M',N')=1$).  We have
$\chi(p^2)\chi_{\rho}(M',N')=\chi_{\stufe_0\stufe_2}(p)\chi_{\rho}(M,N).$
Reversing, take $(M'\ N')$ of $\stufe$-type $\rho$, $p$-type 1.  We need to count the
equivalence classes $SL_2(\Z)(M\ N)$ so that
$\left(\begin{array}{cc} 1\\&\frac{1}{p}\end{array}\right)(pM\ N)\in SL_2(\Z)(M'\ N')$.
For any $E\in SL_2(\Z)$, we have $\left(\begin{array}{cc} 1\\&p\end{array}\right) E
\left(\begin{array}{cc} 1\\&\frac{1}{p}\end{array}\right) \in SL_2(\Z)$ if and only if
$E\equiv\left(\begin{array}{cc} *&0\\*&*\end{array}\right)\ (p)$; thus we need to count the integral, coprime
pairs $(M\ N)=\left(\begin{array}{cc} 1\\&p\end{array}\right) E(M'/p\ \ N')$ where $E$ varies over $\G_1$.
(Note that we automatically have $M\,^tN$ symmetric since $M'\,^tN'$ is symmetric.)
We can assume $p$ divides row 2 of $M'$.  To have $M$ integral, we need $p$ dividing
row 1 of $EM'$; there is 1 choice of $E$ so that this is the case.
Thus $p$ does not divide row 2 of $M$ or row 1 of $N$, so $(M,N)=1$.  So the
contribution from these terms is $\chi_{\stufe_0\stufe_2}(p)p^{k-3}\E_1$.

Now suppose $\ell=0$.  Thus $\rank_pM=2=\rank_pM'$.  We have
$$\chi(p^2)\chi_{\rho}(M',N')=\chi_{\stufe_1}(p)\chi_{\stufe_2}(p^2)\chi_{\rho}(M,N).$$
So the contribution from these terms is $\chi_{\stufe_1}(p)\chi_{\stufe_2}(p^2)p^{-3}\E_2$.

{\bf Case II:} $r=1$.  Here we take 
$$(M'G\ N'\,^tG^{-1})=D_{\ell}^{-1}\left(M\left(\begin{array}{cc} 1\\&p\end{array}\right)\ 
N\left(\begin{array}{cc} p\\&1\end{array}\right)+MY\right),$$
where $\ell=\rank_p\left(M\left(\begin{array}{cc} 1\\&p\end{array}\right)\ N\left(\begin{array}{cc} p\\&1\end{array}\right)\right),$
$G\in\G_1$, $Y=\left(\begin{array}{cc} y\\&0\end{array}\right)$,
and the equivalence class representative $(M\ N)$ is adjusted so that $(M'\ N')$
is integral.

Suppose $\ell=2$.  Then $(M',N')=1$, $\rank_pM'=1$, and
$\chi(p)\chi_{\rho}(M',N')=\chi_{\stufe_0}(p^2)\chi_{\stufe_1}(p)
\chi_{\rho}(M,N).$
Reversing,
$$(M\ N)=\left(M'G\left(\begin{array}{cc} 1\\&\frac{1}{p}\end{array}\right)\ \left(N'\,^tG^{-1}-M'GY\right)
\left(\begin{array}{cc} \frac{1}{p}\\&1\end{array}\right)\right).$$
We can assume $p$ divides row 2 of $M'$; to have $M$ integral, we need to choose the unique
$G$ so that $q|m_2$ where $M'G=\left(\begin{array}{cc} m_1&m_2\\pm_3&pm_4\end{array}\right)$.  Then $p\nmid m_1$,
and by symmetry, $p\nmid n_4$ where $N'\,^tG^{-1}=\left(\begin{array}{cc} n_1&n_2\\n_3&n_4\end{array}\right)$.
To have $N$ integral, we need to choose the unique $y$ modulo $p$ so that
$n_1\equiv m_1y\ (p)$.  So $M,N$ are integral and coprime, and the contribution from
these terms is $\chi_{\stufe_0}(p^2)\chi_{\stufe_1}(p)p^{2k-3}\E_1$.

Say $\ell=1$.  Then we must have $M=\left(\begin{array}{cc} m_1&m_2\\pm_3&m_4\end{array}\right)$,
$N=\left(\begin{array}{cc} n_1&n_2\\n_3&pn_4\end{array}\right)$ with $p\nmid(m_1\ n_2)$; since $(M,N)=1$,
we must also have $p\nmid(m_4\ n_3)$.  Thus $(M',N')=1$ with $\rank_pM'=0,1,$ or 2,
and $$\chi(p)\chi_{\rho}(M',N')=\chi_{\stufe_0\stufe_2}(p)\chi_{\rho}(M,N).$$ 
Reversing,
\begin{align*}
(M\ N)=\\
&\left(\begin{array}{cc} 1\\&p\end{array}\right) E
\left(M'G\left(\begin{array}{cc} 1\\&\frac{1}{p}\end{array}\right)\ 
(N'\,^tG^{-1}-M'GY)\left(\begin{array}{cc} \frac{1}{p}\\&1\end{array}\right)\right),
\end{align*}
$E\in\G_1$.

Still assuming $\ell=1$, suppose $\rank_pM'=0$.  Then to have $N$ integral, for each $E$ we 
must choose the unique $G$ so that $q|n_1$ where $EN'\,^tG^{-1}=\left(\begin{array}{cc} n_1&n_2\\n_3&n_4\end{array}\right)$.
Thus $p\nmid n_2n_3$, and hence $\rank_pN=2$ for all choices of $y$.  So the contribution from these
terms is $\chi_{\stufe_0\stufe_2}(p)p^{k-3}\cdot p(p+1)\E_0$.

Continuing with the assumption $\ell=1$, suppose $\rank_pM'=1$; we can assume $p$ divides row 2 of $M'$.
To have $M,N$ integral, we need $p|m_2$, $n_1\equiv m_1y\ (p)$ where
$EM'G=\left(\begin{array}{cc} m_1&m_2\\m_3&m_4\end{array}\right)$, $EN'\,^tG^{-1}=\left(\begin{array}{cc} n_1&n_2\\n_3&n_4\end{array}\right)$.
Say $p$ divides row 2 of $EM'$ (this is the case for $q$ choices of $E$); then we need to choose the
unique $G$ so that $p|m_2$.  Then $p\nmid m_1$, and by symmetry, $p|n_3$.  But then $p$ divides
row 2 of both $M$ and $N$, so $(M,N)\not=1$.  So take the unique $E$ so that $p$ does not divide
row 2 of $EM'$; thus, by our choice of representatives in $\G_1$, we have $p$ dividing row 1 of $EM'$.
To have $N$ integral, we need to choose the unique $G$ so that $p|n_1$.  Then $p\nmid n_2$, and by
symmetry, $p|m_4$.  Since $\rank_pM'=1$, $p\nmid m_3$.  To have $(M,N)=1$, we need to choose the unique
$y$ so that $n_3\not\equiv m_3y\ (p)$; so we have $p-1$ choices for $y$.  The contributions from
these terms is $\chi_{\stufe_0\stufe_2}(p)p^{k-3}(p-1)\E_1$.

Now assume $\ell=1$, $\rank_pM'=2$.  Then for each $E$, choose the unique $G$ so that $p|m_2$
where $EM'G=\left(\begin{array}{cc} m_1&m_2\\m_3&m_4\end{array}\right)$; then $M$ is integral.  Also, $p\nmid m_1m_3$ so
$\rank_pM=2$.  Choose the unique $y$ so that $n_1\equiv m_1y\ (p)$; so $M,N$ are integral and coprime.
The contributions from these terms is $\chi_{\stufe_0\stufe_2}(p)p^{k-3}(p+1)\E_2$.

Now assume $\ell=0$; since $(M,N)=1$, we must have $M=\left(\begin{array}{cc} pm_1&m_2\\pm_3&m_4\end{array}\right)$,
$N=\left(\begin{array}{cc} n_1&pn_2\\n_3&pn_4\end{array}\right)$ with $p\nmid m_2n_3-n_1m_4$.  Thus $(M',N')=1$, with
$\rank_pM'\ge 1$, and
$$\chi(p)\chi_{\rho}(M',N')=\chi_{\stufe_1}(p)\chi_{\stufe_2}(p^2)\chi_{\rho}(M,N).$$  Reversing,
\begin{align*}
&(M\ N)=\\
&p\left(M'G\left(\begin{array}{cc} 1\\&\frac{1}{p}\end{array}\right)\ 
(N'\,^tG^{-1}-M'GY)\left(\begin{array}{cc} \frac{1}{p}\\&1\end{array}\right)\right).
\end{align*}

Say $\rank_pM'=1$; we can assume $p$ divides row 2 of $M'$.  So to have $(M,N)=1$ we need to choose
$G$ so that $p\nmid m_2$ where 
$$M'G=\left(\begin{array}{cc} m_1&m_2\\pm_3&pm_4\end{array}\right);$$ 
this gives us $p$ choices
for $G$.  Write $$N'\,^tG^{-1}=\left(\begin{array}{cc} n_1n_2\\n_3&n_4\end{array}\right);$$ 
by symmetry, if $p|n_3$ then we must
have $p|n_4$, and consequently $(M',N')\not=1$.  So $p\nmid n_3$, and $(M,N)=1$ for all choices
of $y$.  The contribution from these terms is $\chi_{\stufe_1}(p)\chi_{\stufe_2}(p^2)p^{-1}\E_1$.

Now say $\rank_pM'=2$; write 
$$M'G=\left(\begin{array}{cc} m_1&m_2\\m_3&m_4\end{array}\right),\  
N'\,^tG^{-1}=\left(\begin{array}{cc} n_1&n_2\\n_3&n_4\end{array}\right).$$
Then $(M,N)=1$ unless
$\left(\begin{array}{cc} n_1\\n_3\end{array}\right) - \left(\begin{array}{cc} m_1\\m_3\end{array}\right)\in\spn_p\left(\begin{array}{cc} m_2\\m_4\end{array}\right).$
This gives us $p-1$ choices for $y$, and the contribution is 
$\chi_{\stufe_1}(p)\chi_{\stufe_2}(p^2)p^{-3}(p^2-1)\E_2$.

{\bf Case III:} $r=2$.  So $(M'\ N')=D_{\ell}^{-1}(M\ pN+MY)$ where $Y\in\Z^{2,2}_{\sym}$ varies
modulo $p$, and $\ell=\rank_pM$ and we assume $p$ divides the lower $2-\ell$ rows of $M$.

Suppose $\ell=2$.  So $(M\ N)=(M'\ (N'-M'Y)/p)$; there is a unique $Y$ so that
$N'\equiv M'Y\ (p)$.  The contribution is 
$\chi_{\stufe_0}(p^2)\chi_{\stufe_1}(p)p^{2k-3}\E_2$.

Now suppose $\ell=1$ and $\rank_pM=1$; so $\rank_pM'\ge 1$, and 
$\chi_{\rho}(M',N')=\chi_{\stufe_0\stufe_2}(p)\chi_{\rho}(M,N).$
Reversing,
$$(M\ N)=\left(\begin{array}{cc} 1\\&p\end{array}\right) E(M'\ (N'-M'Y)/p)$$
where $E$ varies over $\G_1$.  Say $\rank_pM'=2$; write 
$$EM'G=\left(\begin{array}{cc} m_1&m_2\\m_3&m_4\end{array}\right),\ 
EN'\,^tG^{-1}=\left(\begin{array}{cc} n_1&n_2\\n_3&n_4\end{array}\right).$$  To have $N$ integral, we need
$(n_1\ n_2)\equiv(m_1\ m_2)Y\ (p)$, and to have $(M,N)=1$, we need
$(n_3\ n_4)\not\equiv (m_3\ m_4)Y\ (p)$.  Thus we have $p-1$ choices for $Y$, and the
contribution from these terms is $\chi_{\stufe_0\stufe_2}(p)p^{k-3}(p^2-1)\E_2$.

Say $\ell=1$ and $\rank_pM'=1$; assume $p$ divides row 2 of $M'$
(so $p$ does not divide row 2 of $N'$).
To have $N$ integral, we need $p$ not dividing row 1 of $EM'$
and $(n_1\ n_2)\equiv(m_1\ m_2)Y\ (p)$; this gives us $p$ choices
for $E$ and $p$ choices for $Y$.  Then $p$ does not divide row 1 of $M$ or row 2 of $N$,
so $(M,N)=1$.  The contribution from these terms is $\chi_{\stufe_0\stufe_2}(p)p^{k-3}\cdot p^2\E_1$.

Say $\ell=0$. So $\rank_pM'=0,1,$ or 2,
and $$\chi_{\rho}(M',N')=\chi_{\stufe_1}(p)\chi_{\stufe_2}(p^2)\chi_{\rho}(M,N).$$  Reversing, we have
$$(M\ N)=p(M'\ (N'-M'Y)/p).$$

Say $\rank_pM'=2$.  We need to choose $Y$ so that $\rank_p(\overline M'N'-Y)=2$; as $Y$ varies
over $\Z^{2,2}_{\sym}$ modulo $p$, so does $\overline M'N'-Y$, and $p^2(p-1)$ of these matrices
have $p$-rank 2.  Thus the contribution from these terms is
$\chi_{\stufe_1}(p)\chi_{\stufe_2}(p^2)p^{-1}(p-1)\E_2$.

Now say $\ell=0$ and $\rank_pM'=1$; we can assume $p$ divides row 2 of $M'$.  To have
$(M,N)=1$, we need $(n_1\ n_2)-(m_1\ m_2)Y\not\in\spn_p(n_3\ n_4)$.  For each $\alpha$
modulo $p$, we have $p$ choices of $Y$ so that
$(n_1\ n_2)-(m_1\ m_2)Y\equiv\alpha(n_3\ n_4)\ (p)$.  Thus we have $p^2(p-1)$ choices for
$Y$ so that $(M,N)=1$.  The contribution from these terms is
$\chi_{\stufe_1}(p)\chi_{\stufe_2}(p^2)p^{-1}(p-1)\E_1$.

Finally, say $\ell=0$ and $\rank_pM'=0$.  Thus $\rank_pN'=2=\rank_pN$ for all choices of $Y$.
So the contribution from these terms is $\chi_{\stufe_1}(p)\chi_{\stufe_2}(p^2)\E_0$.

Combining all the terms yields the result. 
\end{proof}

\begin{prop}\label{proposition 3.4} 
For $p$ a prime not dividing $\stufe$,
$$\E_{\rho}|T_1(p^2)
=(p+1)\big(\chi_{\stufe_0}(p^2)p^{2k-3}+\chi(p)p^{k-3}(p-1)+\chi_{\stufe_2}(p^2)\big)\E_{\rho}.$$
\end{prop}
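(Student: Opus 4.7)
The plan is to follow the architecture of the proof of Proposition 3.3. Decompose $\E_\rho = \E_0 + \E_1 + \E_2$, where $\E_j$ is supported on pairs $(M\ N)$ of $\stufe$-type $\rho$ and $p$-type $j$. Then substitute the explicit coset representatives for $T_1(p^2)$ from Proposition 2.1 of [4] into
$$2\E_\rho | T_1(p^2) = p^{k-3} \sum \chi(\det D_\gamma)^{-1}\,\overline\chi_\rho(M,N)\,\det(M^*\tau + N^*)^{-k},$$
where $(M^*\ N^*)$ denotes the image of $(M\ N)$ under the transformation attached to $\delta_1^{-1}\gamma$. Because $\delta_1 = \left(\begin{array}{cc} X & \\ & X^{-1}\end{array}\right)$ with $X = \left(\begin{array}{cc} p & \\ & 1\end{array}\right)$ acts asymmetrically on the two coordinates of the underlying $\F$-space, each representative mixes the rows and columns of $(M\ N)$ in a manner that splits naturally into three main families, which I expect to correspond to the three summands in the stated eigenvalue.

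First I would list the coset representatives of $T_1(p^2)$, grouping them by the shape of the block-upper-triangular piece appearing to the right of $\delta_1^{-1}$. For each family, case on the rank $\ell$ at $p$ of the leading block of $(M^*\ N^*)$, and then on the $p$-type of the resulting pair $(M'\ N')$. Reverse the transformation as in Proposition 3.3: fix $(M'\ N')$ of $\stufe$-type $\rho$ and enumerate the parameter tuples $(G,Y,E)$ together with source equivalence classes $SL_2(\Z)(M\ N)$ producing it. Lemma \ref{lemma 3.1} lets us propagate $p$-divisibility conditions between entries of $M$ and entries of $N$ under the symmetry constraint, and Lemma \ref{lemma 3.2} enumerates $G \in \G_1(p)$ via the lines in $\F x_1 \oplus \F x_2$. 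As before, choose $Y \equiv 0\ (\stufe)$ so that $\chi(\det D_\gamma)$ contributes only through its $p$-part and $\chi_\rho(M,N) = \chi_\rho(M^*,N^*)$.

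Finally, sum the contributions from all subcases to each of $\E_0$, $\E_1$, $\E_2$. The assertion is that the total coefficient on each $\E_j$ is the same scalar
$$(p+1)\bigl(\chi_{\stufe_0}(p^2)p^{2k-3} + \chi(p)p^{k-3}(p-1) + \chi_{\stufe_2}(p^2)\bigr),$$
so that the sum collapses to a multiple of $\E_\rho$. The main obstacle is purely combinatorial: the subcases for $T_1(p^2)$ are more numerous than those for $T(p)$, and the pairing between source and target $p$-types is more intricate because $X$ is neither trivial nor a scalar. I expect the common factor $(p+1)$ to arise from summing over the $|\mathbb P^1(\F)| = p+1$ choices of line governing how the invertible coordinate of $X$ meets the row span of $M$ mod $p$, and the disappearance of the $\chi_{\stufe_1}(p)$ factor (compared to the eigenvalue in Proposition 3.3) to reflect that the asymmetric action of $X$ replaces the full $p\cdot I$-block used for $T(p)$. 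Confirming that the counts of $G$, $Y$, and $E$ in each subcase really do conspire to give the same scalar on all three $\E_j$ is the core computational task.
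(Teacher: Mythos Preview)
Your outline follows the paper's architecture and is correct as far as it goes, but it is missing one structural ingredient that the paper's proof relies on and that you will not be able to avoid once you start counting.

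For $T(p)$ the reversal counts depend only on the $p$-type of $(M'\ N')$, so the decomposition $\E_\rho=\E_0+\E_1+\E_2$ suffices. For $T_1(p^2)$ this is no longer true: because the representatives from Proposition~2.1 of [4] involve $\left(\begin{smallmatrix}1/p&\\&1\end{smallmatrix}\right)$ acting on an already $p$-divisible column, the reversal imposes $p^2$-divisibility conditions on entries of $M'$. Concretely, in several subcases the number of preimages $(M,N,G,Y,E)$ of a fixed $(M'\ N')$ with $\rank_pM'=1$ depends on whether $(M'\ N')$ has $p^2$-type $(1,1)$ or $(1,2)$, and similarly for $\rank_pM'=0$ one must distinguish $p^2$-types $(0,0),(0,1),(0,2)$. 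The paper therefore refines the decomposition to $\E_1=\E_{1,1}+\E_{1,2}$ and $\E_0=\E_{0,0}+\E_{0,1}+\E_{0,2}$, and only after summing all subcases do the coefficients of $\E_{1,1}$ and $\E_{1,2}$ (resp.\ of the three pieces of $\E_0$) agree.

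There is a further refinement you have not anticipated: within $p^2$-type $(0,2)$ the counts still depend on the isometry class over $\F=\Z/p\Z$ of the quadratic form $\frac{1}{p}M'\,{}^tN'$ (equivalently, on how many isotropic lines it has). The paper splits $\E_{0,2}=\E_{0,2,+}+\E_{0,2,-}$ accordingly and uses Lemma~\ref{lemma 3.2} to translate the parameter $E$ (or $G$) into a line in $V$, then counts isotropic versus anisotropic lines in $\mathbb H$ versus $\A$ (with a separate check at $p=2$). Your invocation of Lemma~\ref{lemma 3.2} is in the right spirit, but you should expect to use it in this quadratic-form context, not merely to enumerate $\G_1(p)$. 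Once you build in the $p^2$-type and the $\pm$ splitting, your plan goes through exactly as in the paper.
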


\begin{proof} Let $\G_1=\G_1(p)$, and
let $\E_2, \E_{1}, \E_{2}$ be defined as in the proof of Proposition 3.3.
Decompose $\E_1,\E_2$ as $\E_1=\E_{1,1}+\E_{1,2}$, $\E_0=\E_{0,0}+\E_{0,1}+\E_{0,2}$ where
$\E_{i,j}$ is supported on pairs $(M\ N)$ of 
$p^2$-type $i,j$.  Further, we split $\E_{0,2}$ as $\E_{0,2,+}+\E_{0,2,-}$
where, for $p$ odd, $\E_{0,2,\nu}$ is supported on pairs $(M\ N)$ so that
$$\nu\left(\frac{\det(MN/p)}{p}\right)=1,$$ and for $p=2$, $\E_{0,2,+}$ is supported
on pairs $(M\ N)$ where $\frac{1}{2}M\,^tN\simeq I\ (2)$ and $\E_{0,2,-}$
on pairs $(M\ N)$ where $\frac{1}{2}M\,^tN\equiv\left(\begin{array}{cc} 0&1\\1&0\end{array}\right)\ (2)$.
When $p$ is odd, set $\epsilon=\left(\frac{-1}{p}\right)$.

Using Proposition 2.1 of [4], the action of $T_1(p^2)$ is given by
matrices parameterised by $r_0,r_1,r_2\in\Z_{\ge 0}$ where $r_0+r_1+r_2=1$.

{\bf Case I: $r_0=1$.}
Here the summands are 
\begin{align*}
&p^{k-3}\chi_{\rho}(M,N)\cdot\\
&\ \det\left(M\left(\begin{array}{cc} \frac{1}{p}\\&1\end{array}\right) G^{-1}\tau+N\left(\begin{array}{cc} p\\&1\end{array}\right)\,^tG+
M\left(\begin{array}{cc} \frac{1}{p}\\&1\end{array}\right) Y\,^tG\right)^{-k}
\end{align*}
where $(M\ N)$ varies over pairs of $\stufe$-type $\rho$,
$G$ varies over $\G_1$, and $Y=\left(\begin{array}{cc} y_1&y_2\\y_2&0\end{array}\right)$ with $y_1$
varying modulo $p^2$ and $y_2$ varying modulo $p$.  

{\bf Case I a:} Say $M\left(\begin{array}{cc} \frac{1}{p}\\&1\end{array}\right)$ is not integral.  Then we can adjust
the equivalence class representative for $(M'\ N')$ so that
\begin{align*}
&(M'G\ N'\,^tG^{-1})=\\
&\ \left(\begin{array}{cc} p\\&1\end{array}\right)\left( M\left(\begin{array}{cc} \frac{1}{p}\\&1\end{array}\right)
\ \ N\left(\begin{array}{cc} p\\&1\end{array}\right) +M\left(\begin{array}{cc} \frac{1}{p}\\&1\end{array}\right) Y\right)
\end{align*}
is coprime with $\rank_pM'=1$ or 2.  So
$\chi_{\rho}(M',N')=\chi_{\stufe_0}(p^2)\chi_{\rho}(M,N).$
Using the techniques used in the proof of Proposition 3.2, we find that in the
case $\rank_pM'=2$ the contribution is
$\chi_{\stufe_0}(p^2)p^{k-3}p^k(p+1)\E_2$, and in the case $\rank_pM'=1$ the contribution is
$\chi_{\stufe_0}(p^2)p^{k-3}p^{k+1}\E_1$.

{\bf Case I b:} Suppose $M\left(\begin{array}{cc} \frac{1}{p}\\&1\end{array}\right)$ is integral
and that $(M',N')=1$ where
$$(M'G\ N'\,^tG^{-1})
=\left(M\left(\begin{array}{cc} \frac{1}{p}\\&1\end{array}\right)\ \ N\left(\begin{array}{cc} p\\&1\end{array}\right)+
M\left(\begin{array}{cc} \frac{1}{p}\\&1\end{array}\right) Y\right).$$
Note that $\rank_pM'\ge 1$, else
$\rank_p N'\le 1$
and hence $(M',N')\not=1$.  Also, note that $\chi_{\rho}(M',N')
=\chi(p)\chi_{\rho}(M,N)$.

Reversing, when $\rank_pM'=2$, we find that for each $G$, there are $p-1$
choices for $Y$ so that $(M\ N)$ is an integral, coprime pair; the contribution
from these terms is  $\chi(p)p^{k-3}(p^2-1)\E_2$.
When $\rank_pM'=1$, we can assume $p$ divides row 2 of $M'$; to have $N$ integral
we need to choose the unique $G\in\G_1$ so that $p$ divides the $2,1$-entry
of $N'\,^tG^{-1}$, and then we have $p(p-1)$ choices for $Y$ so that
$(M,N)=1$.  So the contribution from these terms is
$\chi(p)p^{k-3}\cdot p(p-1)\E_1$.

{\bf Case Ic:} Suppose $M\left(\begin{array}{cc} \frac{1}{p}\\&1\end{array}\right)$ is integral
and that
$$\rank_p\left(M\left(\begin{array}{cc} \frac{1}{p}\\&1\end{array}\right)\ \ N\left(\begin{array}{cc} p\\&1\end{array}\right)\right)<2;$$
adjusting the equivalence class representative, we have $(M',N')=1$ where
\begin{align*}
&(M'G\ N'\,^tG^{-1})=\\
&\ \left(\begin{array}{cc} 1\\&\frac{1}{p}\end{array}\right)
\left(M\left(\begin{array}{cc} \frac{1}{p}\\&1\end{array}\right)\ \ N\left(\begin{array}{cc} p\\&1\end{array}\right)+M\left(\begin{array}{cc} \frac{1}{p}\\&1\end{array}\right)
Y\right).
\end{align*}  
Also, $\chi_{\rho}(M',N')=\chi_{\stufe_2}(p^2)\chi_{\rho}(M,N)$.

Reversing, take a coprime pair $(M'\ N')$.  Then arguing as above, we find that
when $\rank_pM'=2$, the contribution is 
$\chi_{\stufe_2}(p^2)p^{-2}(p^2-1)(p+1)\E_2$.
When $\rank_pM'=1$, the contribution is
$\chi_{\stufe_2}(p^2)p^{-1}(p^2+p-1)\E_1$.
When $\rank_pM'=0$, the contribution is
$\chi_{\stufe_2}(p^2)p^{k-3}\cdot p^{3-k}(p+1)\E_0.$

{\bf Case II: $r_1=1$.} Here the summands are
$$p^{k-3}\chi(p)\overline\chi_{\rho}(M,N)\det\left(MG^{-1}\tau+N\,^tG+M\left(\begin{array}{cc} \frac{y}{p}\\&0\end{array}\right)\,^tG\right)^{-k}$$
where $(M\ N)$ varies over pairs of $\stufe$-type $\rho$, 
 $y$ varies modulo $p$ with $p\nmid y$, $G$ varies over $^t\G_1$.

{\bf Case II a:} Say $M\left(\begin{array}{cc} \frac{1}{p}\\&1\end{array}\right)$ is not integral.  Adjust
the 
representative $(M\ N)$
$$(M'G\ N'\,^tG^{-1})=\left(\begin{array}{cc} p\\&1\end{array}\right) \left(M\ N+M\left(\begin{array}{cc} \frac{y}{p}\\&0\end{array}\right)\right)$$
is integral.  Then $(M',N')=1$ with $(M'\ N')$ of $p^2$-type $1,2$ or $0,1$ or $0,2$.
Also,
$\chi(p)\chi_{\rho}(M',N')=\chi_{\stufe_0}(p^2)\chi_{\rho}(M,N).$

Reversing, when $(M'\ N')$ is $p^2$-type $1,2$, we can assume $p$ divides row 2 of $M'$;
we find there are unique choices for $G,Y$ so that $N$ is integral, and then we
get $(M,N)=1$.  Thus the contribution from these terms is
$\chi_{\stufe_0}(p^2)p^{k-3}\cdot p^k\E_{1,2}.$

Next suppose $M'$ is $p^2$-type $0,1$; we can assume $p^2$ divides row 2 of $M'$.
For 1 choice of $E$ we have $p^2|(m_1\ m_2)$; but then we cannot have $N$ integral
and coprime to $M$.  For the other $p$ choices of $E$ we have $p^2|(m_3\ m_4)$.  Choose
the unique $G$ so that $p|n_2$; so $p\nmid n_1n_4$ and $p^2\nmid m_1$, else by
symmetry $p^2|m_2$, contradicting that $M'$ is $p^2$-type $(0,1)$.  Then choose the unique
$y\not\equiv0\ (p)$ so that $n_1\equiv m_1\frac{y}{p}\ (p)$; we get $p$ not dividing row 1 of $M$
or row 2 of $N$, so $(M,N)=1$.  The contribution from these terms is
$\chi_{\stufe_0}(p^2)p^{k-3}\cdot p^k\cdot p\E_{0,1}.$

Now suppose $(M'\ N')$ is $p^2$-type $0,2$.  For each choice of $E\in\G_1$,
we choose the unique $G\in\,^tG^{-1}$ so that $EN'G=\left(\begin{array}{cc} n_1&pn_2\\n_3&n_4\end{array}\right)$
(thus $p\nmid n_1n_4$). 
To have $N$ integral, we need to choose $y\not\equiv0\ (p)$
so that $n_1\equiv m_1\frac{y}{p}\ (p)$; this is possible if and only if $p^2\nmid m_1$.  
Let $V=\F x_1\oplus\F x_2$
be equipped with the quadratic form $\frac{1}{p}M'\,^tN'$ relative to the basis
$(x_1\ x_2)$.  Then with $(x_1'\ x_2')=(x_1\ x_2)\,^tE$, $\F x_1'$ varies
over all lines in $V$, and $\frac{1}{p}EM'\,^tN'\,^tE$ represents the quadratic form
relative to $(x_1'\ x_2')$.  When $p$ is odd and $V\simeq\mathbb H$, there are 2 choices of $E$ so that
$\F x_1'$ is isotropic; equivalently, when $V\simeq\mathbb H$ there are 2 choices of $E$ so
that $p^2|m_1$ (since the value of the quadratic form on $x_1'$ is $m_1n_\frac{1}{p}\in\F$).
When $p$ is odd and $V\not\simeq\mathbb H$, we have $p^2\nmid m_1$ for all $p+1$ choices of $E$.
So the contribution when $p$ is odd is 
$$\chi_{\stufe_0}(p^2)p^{k-3}\cdot p^k(p-1)\E_{0,2,\epsilon}
+\chi_{\stufe_0}(p^2)p^{k-3}\cdot p^k(p+1)\E_{0,2,-\epsilon}.$$
When $p=2$ and $V\simeq\left(\begin{array}{cc} 0&1\\1&0\end{array}\right)$
we have $\F x_1'$ isotropic for all $E$; when $V\simeq I$ we have $\F x_1'$ isotropic for
1 choice of $E$.  So the contribution when $p=2$ is
$2\chi_{\stufe_0}(p^2)p^{2k-3}\E_{0,2,+}$.

{\bf Case II b:} Suppose $M\left(\begin{array}{cc} \frac{1}{p}\\&1\end{array}\right)$ is integral, and
$$(M'G\ N'\,^tG^{-1})=\left(M\ N+M\left(\begin{array}{cc} \frac{y}{p}\\&0\end{array}\right)\right)$$
is a coprime symmetric pair with $\rank_pM'\le 1$.
Note that $\chi(p)\chi_{\rho}(M',N')=\chi(p)\chi_{\rho}(M,N)$.

Reversing, first suppose that $\rank_pM'=1$, and assume $p$ divides
row 2 of $M'$.
To have $N$ integral, 
choose the unique $G$ so that $p|m_1$
(so $p\nmid m_2$).  To have $(M,N)=1$, we need $p$ not dividing row 2 of $N$.
If $M'$ is $p^2$-type $(1,1)$, we can assume $p^2$ divides row 2 of $M'$ and then
for any choice of $y\not\equiv0\ (p)$ we have $p$ not dividing row 2 of $N$.
If $M'$ is $p^2$-type $(1,2)$ then $p^2\nmid m_3$ so there are $p-2$ choices for
$y\not\equiv0\ (p)$ so that $p$ does not divide row 2 of $N$.  Thus the contribution
from these terms is $\chi(p)p^{k-3}(p-1)\E_{1,1}+\chi(p)p^{k-3}(p-2)\E_{1,2}$.

Now suppose $M'$ is $p^2$-type $(0,0)$; then $N$ is invertible modulo $p$
for all choices of $G$ and $y$.  Hence the
contribution is $\chi(p)p^{k-3}(p^2-1)\E_{0,0}$.

Suppose $M'$ is $p^2$-type $(0,1)$; then we can assume $p^2$ divides row 2 of
$M'$.
For 1 choice of $G$ we have $p^2|m_1$
and then for any $y$ we have $\rank_pN=2$.
Say we take any of the other $p$ choices for $G$ so that 
$p\nmid m_1$.  By symmetry, $p\nmid n_4$.
So there are $p-2$ choices of $y\not\equiv0\ (p)$ so that $\rank_pN=2$.
Thus the contribution from these terms is $\chi(p)p^{k-3}(p^2-p-1)\E_{0,1}.$

Suppose $M'$ is $p^2$-type $(0,2)$.  Let $V=\F x_1\oplus\F x_2$ be
equipped with the quadratic form $\frac{1}{p}\overline N'M'$ relative to
$(x_1\ x_2)$.  Then with $(x_1'\ x_2')=(x_1\ x_2)G$, $\F x_1'$ varies over all
lines in $V$ as $G$ varies over $^t\G_1$, and the value of the quadratic form
on $x_1'$ is $\overline{\det N'}(m_1n_4-m_3n_2)/p$.  We have $\rank_2 N=2$ if and
only if $\det N'\not\equiv y(m_1n_4-m_3n_2)/p\ (p)$.  When 
$p$ is odd and $V\simeq\mathbb H$, there are
2 choices of $G$ so that $x_1'$ is isotropic, and then $\rank_pN=2$ for all
$y\not\equiv0\ (p)$; for a choice of $G$ so that $x_1'$ is anisotropic, there
are $p-2$ choices of $y\not\equiv0 (p)$ so that $\rank_pN=2$.  Hence the
contribution from these terms when $p$ is odd is
$\chi(p)p^{k-3}\cdot p(p-1)\E_{0,2,\epsilon}+\chi(p)p^{k-3}\cdot (p+1)(p-2)\E_{0,2,-\epsilon}$.
When $p=2$,
we have $y\equiv 1\ (p)$; when $V\simeq\left(\begin{array}{cc} 0&1\\1&0\end{array}\right)$,
$\F x_1'$ is isotropic for all 3 $G$, and 
when $V\simeq I$ there is 1 choice of $G$ so that $\F x_1'$ is isotropic.
So the contribution when $p=2$ is 
$\chi(p)p^{k-3}\E_{0,2,+}+3\chi(p)p^{k-3}\E_{0,2,-}$.

{\bf Case II c:} Suppose $M\left(\begin{array}{cc} \frac{1}{p}\\&1\end{array}\right)$ is integral and
$$\rank_p\left(M\ N+M\left(\begin{array}{cc} \frac{y}{p}\\&0\end{array}\right) \right)=1;$$
we can adjust the representative so that
$$(M'G\ N'\,^tG^{-1})=\left(\begin{array}{cc} 1\\&\frac{1}{p}\end{array}\right)
\left(M\ N+M\left(\begin{array}{cc} \frac{y}{p}\\&0\end{array}\right) \right)$$
is an integral pair.  Then $(M',N')=1$ with $\rank_pM'\ge 1$, and
$$\chi(p)\chi_{\rho}(M',N')=\chi_{\stufe_2}(p^2)\chi_{\rho}(M,N).$$

Reversing, we need to count the equivalence classes $SL_2(\Z)(M\ N)$ so that
$\left(\begin{array}{cc} 1\\&\frac{1}{p}\end{array}\right)\left(M\ N+M\left(\begin{array}{cc} \frac{y}{p}\\&0\end{array}\right)\right)
\in SL_2(\Z)(M'\ N').$  Thus we need to count the integral, coprime pairs
$$(M\ N)=\left(\begin{array}{cc} 1\\&p\end{array}\right) E\left(M'G\ N'\,^tG^{-1}
-M'G\left(\begin{array}{cc} \frac{y}{p}\\&0\end{array}\right)\right),$$
where $E\in\G_1$.  Write $EM'G=\left(\begin{array}{cc} m_1&m_2\\m_3&m_4\end{array}\right)$,
$EN'\,^tG^{-1}=\left(\begin{array}{cc} n_1&n_2\\n_3&n_4\end{array}\right)$.

Say $\rank_pM'=2$.  To have $N$ integral, choose the unique $G$ so that
$p|m_1$ (and hence $p\nmid m_2m_3$).  Then for any choice of $y$, $(M,N)=1$.
So the contribution from these terms is $\chi_{\stufe_2}(p^2)p^{k-3}\cdot p^{-k}(p^2-1)\E_2$.

Now say $\rank_pM'=1$; we can assume $p$ divides row 2 of $M'$.  For $p$
choices of $E$, we have $p$ dividing row 2 of $EM'$, and then $p$ divides row 2
of $(M\ N)$ (meaning $(M,N)\not=1$).  So take the unique $E$ so that $p$ divides
row 1 of $EM'$ (and hence $p$ does not divide row 1 of $EN'$).  So to have
$(M,N)=1$, we need to choose $G$ so that $p\nmid m_3$; this gives us $p$ choices
for $G$.  Then for every $y\not\equiv0\ (p)$, we have $(M,N)=1$.  So the contribution
from these terms is $\chi_{\stufe_2}(p^2)p^{k-3}\cdot p^{-k}\cdot p(p-1)\E_1$.

{\bf Case III: $r_2=1$.} Here the summands are 
$$\chi(p^2)\overline\chi_{\rho}(M,N)p^{k-3}\det\left(M\left(\begin{array}{cc} p\\&1\end{array}\right)
G^{-1}\tau+N\left(\begin{array}{cc} \frac{1}{p}\\&1\end{array}\right)\,^tG\right)^{-k},$$
where $(M\ N)$ varies over pairs of $\stufe$-type $\rho$, and $G$
varies over $^t\G_1$.

{\bf Case III a:} Suppose $N\left(\begin{array}{cc} \frac{1}{p}\\&1\end{array}\right)$ is not
integral.  We can adjust the representative so that $N=\left(\begin{array}{cc}
n_1&n_2\\pn_3&n_4\end{array}\right)$ (so $p\nmid n_1$).  Write $M=\left(\begin{array}{cc}
m_1&m_2\\m_3&m_4\end{array}\right)$.  Then $p\nmid (m_4\ n_4)$, else by
symmetry, $p|m_3$ and $(M,N)\not=1$.  Thus with 
\begin{align*}
&(M'G\ N'\,^tG^{-1})=\\
&\left(\begin{array}{cc} p\\&1\end{array}\right)\big(M\left(\begin{array}{cc}
p\\&1\end{array}\right)\ N\left(\begin{array}{cc} \frac{1}{p}\\&1\end{array}\right)\big),
\end{align*}
we have $(M',N')=1$ and $\rank_pM'\le 1$.
When $\rank_pM'=1$, we must have $M'$ of $p^2$-type
$1,1$ with $p$ dividing row 1 of $M'$, and
$$\chi(p^2)\chi_{\rho}(M',N')=\chi_{\stufe_0}(p^2)\chi_{\rho}(M,N).$$

Reversing, 
$$(M\ N)=\left(\begin{array}{cc} \frac{1}{p}\\&1\end{array}\right) E\left(M'G\left(\begin{array}{cc}
\frac{1}{p}\\&1\end{array}\right) \ \ N'\,^tG^{-1}\left(\begin{array}{cc} p\\&1\end{array}\right)\right).$$
We know $\left(\begin{array}{cc} \frac{1}{p}\\&1\end{array}\right) E\left(\begin{array}{cc} p\\&1\end{array}\right)\in
SL_2(\Z)$ if and only if $E\equiv\left(\begin{array}{cc} *&0\\*&*\end{array}\right)\ (p)$.
Write $EM'G=\left(\begin{array}{cc} m_1&m_2\\m_3&m_4\end{array}\right),$
$EN'\,^tG^{-1}=\left(\begin{array}{cc} n_1&n_2\\n_3&n_4\end{array}\right)$.

First suppose $M'$ is $p^2$-type $1,1$ with $p$ dividing row 1 of
$M'$.  We know $p$ divides row 1 of $EM'$ if and only if
$E\equiv\left(\begin{array}{cc} *&0\\*&*\end{array}\right)\ (p)$; thus we only need to
consider $E=I$, and we can assume $p^2$
divides row 1 of $M'$.  To ensure $N$ is integral, we need to choose
the unique $G\in\,^t\G_1$ so that $p|n_2$; then by symmetry, $p|n_3$, and
since $\rank_pM'=1$, $p\nmid m_4$.  Then $M,N$
are integral and coprime.
So the contribution from these terms is $\chi_{\stufe_0}(p^2)p^{k-3}\cdot p^k\E_{1,1}$.

Now suppose $M'$ is $p^2$-type $0,0$.  Then for each $E\in\G_1$, there
is a unique $G\in\,^t\G_1$ so that $p|n_2$ (and hence $p\nmid n_1n_4$).  
Thus $\rank_pN=1$ so $(M,N)=1$, and the contribution from these terms is
$\chi_{\stufe_0}(p^2)p^{k-3}\cdot p^k(p+1)\E_{0,0}$.

Suppose $M'$ is $p^2$-type $0,1$;  we can assume $p^2$
divides row 2 of $EM'$.  
Suppose $p^2$ divides row 2 of $EM'$; to have $M$ integral, we choose
the unique $G$ so that $p^2|m_1$.  Then $p^2\nmid m_2$, and by symmetry,
$p|n_4$.  Hence $p\nmid n_2n_3$, and $N$ is not integral.  So choose $E$
so that $p^2$ does not divide row 2 of $EM'$; we have 1 choice for $E$, and
then $p^2$ divides row 1 of $EM'$.  Choose the unique $G$ so that $p|n_2$; then
$p\nmid n_1n_4$, and $N$ is integral with $\rank_pN=2$.  So the contribution
from these terms is
$\chi_{\stufe_0}(p^2)p^{k-3}\cdot p^k\E_{0,1}$.

Suppose $M'$ is $p^2$-type $0,2$.  For each $E\in\G_1$, choose the unique
$G\in\,^t\G_1$ so that $q|n_2$.  To have $M$ integral, we need $p^2|m_1$.
Let $V=\F x_1\oplus\F x_2$ be equipped with the quadratic form given
by $\frac{1}{p}M'\,^tN'$ relative to $(x_1\ x_2)$.  Then with $(x_1'\
x_2')=(x_1\ x_2)\,^tE$, the quadratic form on $V$ is given by
$\frac{1}{p}EM'\,^tN'\,^tE$ relative to $(x_1'\ x_2')$.  As $E$ varies
over $\G_1$, $\F x_1'$ varies over all lines in $V$.  Suppose
$p$ is odd; then to have $p^2|m_1$,
we need $V\simeq\mathbb H$, and then $p|m_1$ for 2 choices of $E$.
Hence the contribution from these terms when $p$ is odd is
 $2\chi_{\stufe_0}(p^2)p^{k-3}\cdot p^k\E_{0,2,\epsilon}$.
In the case that $p=2$, the contribution is
$\chi_{\stufe_0}(p^2)p^{2k-3}\E_{0,2,+}+3\chi_{\stufe_0}(p^2)p^{2k-3}\E_{0,2,-}$.

{\bf Case III b:} Suppose $N\left(\begin{array}{cc} \frac{1}{p}\\&1\end{array}\right)$ is integral,
and $\rank_p(M'\ N')=2$ where 
$$(M'G\ N'\,^tG^{-1})=\left(M\left(\begin{array}{cc} p\\&1\end{array}\right)\ N\left(\begin{array}{cc}
\frac{1}{p}\\&1\end{array}\right)\right).$$ 
So $\rank_pM\ge 1$, $\rank_pM'\le 1$, and $M'$ cannot be $p^2$-type 
$0,0$.  Also, when $M'$ is $p$-type 1, we can assume $p$ divides
row 2 of $M\left(\begin{array}{cc} p\\&1\end{array}\right)$; then using symmetry, 
we see $p$ divides
row 2 of $N$, so we must have $M'$ of $p^2$-type $1,2$.  Note that
$\chi(p^2)\chi_{\rho}(M',N')=\chi(p)\chi_{\rho}(M,N).$

Reversing, $(M\ N)=\left(M'G\left(\begin{array}{cc} \frac{1}{p}\\&1\end{array}\right)\ \
N'\,^tG^{-1}\left(\begin{array}{cc} p\\&1\end{array}\right)\right).$
Write 
$$M'G=\left(\begin{array}{cc} m_1&m_2\\m_3&m_4\end{array}\right),\ 
N'\,^tG^{-1}=\left(\begin{array}{cc} n_1&n_2\\n_3&n_4\end{array}\right).$$

Suppose $M'$ is $p^2$ type $1,2$; assume $p$ divides row 2 of $M'$.
Choose the unique $G$ so that $p|m_1$; thus $p\nmid m_2$, $p^2\nmid m_3$.
So $(M,N)=1$ and the contribution from these terms is
$\chi(p)p^{k-3}\E_{1,2}.$

Suppose $M'$ is $p^2$-type $0,1$; assume $p^2$ divides row 2 of $M'$.
We have $(M,N)=1$ if and only if $p^2\nmid m_1n_4$; by symmetry,
$p^2|m_1$ if and only if $p|n_4$.  So choose $G$
so that $p|m_1$; we have $p$ choices for $G$, and hence the contribution from these terms is
$\chi(p)p^{k-3}\cdot p\E_{0,1}.$

Now suppose that $(M'\ N')$ is $p^2$-type $0,2$.  Let $V=\F x_1\oplus\F x_2$ be
equipped with the quadratic form given by $\frac{1}{p}\overline N'M'$
relative to $(x_1\ x_2)$.
So relative to $(x_1'\ x_2')=(x_1\ x_2)G$, the quadratic form is given by
$\frac{1}{p}\,^tG\overline N'M'G\equiv\overline d\left(\begin{array}{cc}
n_4m_\frac{1}{p}&*\\*&*\end{array}\right)\ (p)$ where $d=\det N'$ (recall that
$p^2|m_3$).  We know $\F x_1'$ varies over all lines in $V$ as $G$
varies over $^t\G_1$.  When $p$ is odd, $p^2\nmid m_1n_4$ for $p-1$ choices of $G$ if
$V\simeq\mathbb H$, and $p^2\nmid m_1n_4$ for $p+1$ choices of $G$ otherwise.
So the contribution from these terms when $p$ is odd is
$\chi(p)p^{k-3}(p-1)\E_{0,2,\epsilon}+\chi(p)p^{k-3}(p+1)\E_{0,2,-\epsilon}.$ 
When $p=2$ the contribution is $2\chi(p)p^{k-3}\E_{0,2,+}$.

{\bf Case III c:} Suppose $N\left(\begin{array}{cc} \frac{1}{p}\\&1\end{array}\right)$ is integral
with $$\rank_p\left(M\left(\begin{array}{cc} p\\&1\end{array}\right)\ N\left(\begin{array}{cc} \frac{1}{p}\\&1\end{array}\right)\right)=1.$$
Adjust the equivalence class representative of $(M\ N)$ so that $p$
divides row 2 of $\left(M\left(\begin{array}{cc} p\\&1\end{array}\right)\ N\left(\begin{array}{cc}
\frac{1}{p}\\&1\end{array}\right)\right)=1.$
Set
$$(M'G\ N'\,^tG^{-1})=\left(\begin{array}{cc} 1\\&\frac{1}{p}\end{array}\right)
\left(M\left(\begin{array}{cc} p\\&1\end{array}\right)\ N\left(\begin{array}{cc} \frac{1}{p}\\&1\end{array}\right)\right)=1.$$
Since $M',N'$ are integral and $(M,N)=1$, we have  $(M',N')=1$.  Also, 
$\rank_pM'\ge 1$ and
$\chi(p^2)\chi_{\rho}(M',N')=\chi_{\stufe_2}(p^2)\chi_{\rho}(M,N).$

Reversing, take $(M'\ N')$ of $p$-type 1 or 2, and set
$$(M\ N)=\left(\begin{array}{cc} 1\\&p\end{array}\right) E\left(M'G\left(\begin{array}{cc}
\frac{1}{p}\\&1\end{array}\right)\ \ N'\,^tG^{-1}\left(\begin{array}{cc} p\\&1\end{array}\right)\right)$$
where $E\in\G_1$.
Write $EM'G=\left(\begin{array}{cc} m_1&m_2\\m_3&m_4\end{array}\right)$,
$EN'\,^tG^{-1}=\left(\begin{array}{cc} n_1&n_2\\n_3&n_4\end{array}\right)$.

First suppose $\rank_pM'=2$.  For each $E$, choose the unique $G$ so that $p|m_1$.
Thus $p\nmid m_2m_3$, so $(M,N)=1$.  The contribution from these terms is
$\chi_{\stufe_2}(p^2)p^{k-3}\cdot p^{-k}\cdot (p+1)\E_2$.

Finally, suppose $\rank_pM'=1$; assume $p$ divides row 2 of $M'$.  Suppose $p$
divides row 2 of $EM'$; choosing $G$ so that $p|m_1$, by symmetry $p|n_4$ and hence
$(M,N)\not=1$.  So choose the unique $E$ so that $p$ does not divide row 2 of $EM'$; then $p$
divides row 1 of $EM'$.  To have $(M,N)=1$, choose $G$ so that $p\nmid m_3$; we have
$p$ choices for $G$, and the contribution from these terms is
$\chi_{\stufe_2}(p^2)p^{k-3}\cdot p^{1-k}\E_1$.

Combining all the contributions yields the result.

\end{proof}

Now we determine the action on Eisenstein series of $T(q)$, $T_1(q^2)$ where $q$ is a prime
dividing $\stufe$.
We let $\F$ denote $\Z/q\Z$; when $q$ is odd, we let $\epsilon=\left(\frac{-1}{q}\right)$, and
we fix $\omega$ so that $\left(\frac{\omega}{q}\right)=-1$.
Let $\G_1=\G_1(q)$; note that we can choose the elements of $\G_1$ to be congruent modulo
$\stufe/q$ to $I$.

\begin{prop}\label{proposition 3.5} Suppose $q$ is a prime dividing $\stufe_2$; then
$$\E_{\rho}|T(q)=\chi_{\stufe_0}(q^2)\chi_{\stufe_1}(q)q^{2k-3}\E_{\rho}.$$
\end{prop}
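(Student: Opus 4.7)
The plan is to adapt the strategy of Proposition \ref{proposition 3.3} to the ramified setting, applying the explicit coset representatives for $T(q)$ given in Proposition 3.1 of [4]. When $q \mid \stufe$, these representatives form a strict subset of those in the unramified case, since one must restrict to $\gamma \in \Gamma_0(\stufe)$ with $\delta \gamma \delta^{-1} \in \Gamma_0(\stufe)$; I would first isolate this restricted list of admissible triples $(r, G_r, Y_r)$ and discard the ones that no longer give cosets.

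Two simplifications then make the computation much shorter than in Proposition \ref{proposition 3.3}. The first is structural: because $q \mid \stufe_2$, every coprime symmetric pair $(M\ N)$ of $\stufe$-type $\rho$ satisfies $\rank_q M = 2$, so in the decomposition of $\E_\rho$ by $q$-type only the piece supported on $q$-type $2$ pairs is nonzero. The second is arithmetic: since $\chi_{\stufe_2}$ has $q$ in its modulus, $\chi_{\stufe_2}(q) = 0$ and hence $\chi_{\stufe_0\stufe_2}(q) = 0$ as well, so any contribution whose character factor involves $\chi_{\stufe_2}(q)$ is automatically killed.

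Then, case by case over the admissible $(r, G_r, Y_r)$, I would form the image pair $(M'\ N')$, determine its $\stufe$-type, and compute the ratio $\chi_\rho(M,N)/\chi_\rho(M',N')$ using the description of $\chi_\rho$ from Section 2, invoking Lemmas \ref{lemma 3.1} and \ref{lemma 3.2} to count equivalence class representatives and the choices of $G$ and $Y$ producing integral coprime pairs. For $\E_\rho|T(q)$ to contribute to $\E_\rho$ itself we need $\rank_q M' = 2$; image pairs of a different $q$-type would a priori live in some other Eisenstein series, but the vanishing of $\chi_{\stufe_2}(q)$ is exactly what forces those contributions to drop out of the sum.

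The main obstacle is the bookkeeping: carefully identifying which triples $(r, G_r, Y_r)$ are admissible when $q \mid \stufe$, and verifying that every surviving contribution either lands in type $\rho$ with the correct character weight or is killed by a vanishing character factor. Once this is done, a single case survives, and its coefficient works out to $\chi_{\stufe_0}(q^2) \chi_{\stufe_1}(q) q^{2k-3}$, matching the specialization of the Proposition \ref{proposition 3.3} formula to $p = q$ once the relations $\chi_{\stufe_2}(q) = \chi_{\stufe_0\stufe_2}(q) = 0$ are imposed.
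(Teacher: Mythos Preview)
Your plan is on the right track---citing Proposition~3.1 of [HW] is exactly what the paper does---but the paper's execution is far shorter than you anticipate, and your second ``simplification'' via $\chi_{\stufe_2}(q)=0$ turns out to be a red herring rather than a working mechanism.

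When $q\mid\stufe$, Proposition~3.1 of [HW] gives the coset representatives for $T(q)$ directly, and only the $r=2$-type survives: the entire operator is a single sum over symmetric $Y$ modulo $q$, with no case analysis over $r$ at all. So the ``bookkeeping obstacle'' you flag essentially evaporates once you quote the correct statement. From there the paper simply sets $(M'\ N')=(M\ qN+MY)$; since $M'=M$ and $\rank_q M=2$ (as you note), every image pair already has $\stufe$-type $\rho$, and reversing amounts to observing that there is a unique $Y\equiv\overline{M'}N'\pmod q$ making $N$ integral. The character comparison is then one line.

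The point to correct in your proposal is the role you assign to $\chi_{\stufe_2}(q)=0$. No image pairs of $q$-type $0$ or $1$ ever arise, so there are no ``other Eisenstein series'' contributions for the vanishing character to kill; the coset structure itself does all the work. Your observation that the eigenvalue matches the formal specialization of the Proposition~\ref{proposition 3.3} formula at $p=q$ with $\chi_{\stufe_2}(q)=0$ is a nice consistency check, but it is not part of the argument.
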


\begin{proof} 
From Proposition 3.1 of \cite{HW}, we know that 
\begin{align*}
&2\E_{\rho}(\tau)|T(q)=\\
&\ q^{2k-3}\sum\overline\chi_{(\stufe_0,\stufe_1,\stufe_2/q)}(M,N)
\chi_q(\det M)\det(M\tau+qN+MY)^{-k}
\end{align*}
where $(M\ N)$ varies over a set of $SL_2(\Z)$-equivalence class
representatives for pairs of $\stufe$-type $\rho$, and
$Y$ varies over all symmetric matrices modulo $q$; recall that we
can choose $Y\equiv0\ (\stufe/q)$.
Take $(M\ N)$ of $\stufe$-type ${\rho}$; set
$(M'\ N')=(M\ qN+MY)$.  Thus $(M'\ N')$ is $\stufe$-type $\rho$;
also, $\chi_{(\stufe_0,\stufe_1,\stufe_2/q)}(M',N')=
\chi_{(\stufe_0,\stufe_1,\stufe_2/q)}(M,N)$.  Reversing, take $(M'\ N')$ of $\stufe$-type $\rho$; set
$(M\ N)=\left(M'\ \frac{1}{q}(N'-M'Y)\right).$  So we need to choose
$Y\equiv\overline M'N'\ (q)$ to have $N$ integral.  
Thus $\overline\chi_{(\stufe_0,\stufe_1,\stufe_2/q)}(M',N')
=\overline\chi_{\stufe_0}(q^2)\chi_{\stufe_1}(q)\overline\chi_{(\stufe_0,\stufe_1,\stufe_2/q)}(M,N).$
\end{proof}

\begin{prop}\label{proposition 3.6} For $q$ a prime dividing $\stufe_1$, let $\rho'=(\stufe_0,\stufe_\frac{1}{q},q\stufe_2)$;
then
$$\E_{\rho}|T(q)=\begin{cases}
\chi_{\stufe_0\stufe_2}(q)(q^{k-1}\E_{\rho}+q^{k-3}(q^2-1)\E_{\rho'})&\text{if $\chi_q=1$,}\\
\chi_{\stufe_0\stufe_2}(q)q^{k-1}\E_{\rho}&\text{if $\chi_q\not=1$.}
\end{cases}$$
\end{prop}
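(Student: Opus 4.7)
The plan is to follow the same template used in the proofs of Propositions 3.3 and 3.5: write out $\E_{\rho}|T(q)$ using an explicit set of coset representatives (in the spirit of Proposition 3.1 of \cite{HW}), organise the resulting sum according to the $\stufe$-type of the output pair $(M'\ N')$, and then in the ``reverse direction'' count the preimages of each $(M'\ N')$ and collect the character factors. The crucial structural point is that every pair of $\stufe$-type $\rho$ has $q$-type $1$ (since $q\mid\stufe_1$), and Lemma \ref{lemma 3.1} pins down the zero pattern of $M$ and $N$ modulo $q$. After applying $T(q)$, the resulting pair $(M'\ N')$ is still coprime, and a short argument shows $\rank_qM'\in\{1,2\}$: one cannot have $\rank_qM'=0$, because $q$ would then divide $M'$ and (by coprimality) not divide $N'$, forcing $q$ to divide $N$ with $q\mid M$ in row $2$, contradicting coprimality of $(M\ N)$.

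Concretely, I would break the sum over $r=0,1,2$ as in Proposition 3.3, but with the coset representatives appropriate to $q\mid\stufe$ — so no $G_r\in\G_1(q)$ in Cases $r=0,2$, and only those upper-triangular representatives that normalise $\Gamma_0(\stufe)$ at $q$. In each case I would identify $(M'\ N')=D_\ell^{-1}(\cdots)$, determine whether the output has $\stufe$-type $\rho$ or $\rho'=(\stufe_0,\stufe_1/q,q\stufe_2)$, and then reverse. For the contribution to $\E_\rho$, the calculation should mirror the ``$p$-type $1$ stays $p$-type $1$'' subcases of Proposition 3.3, using Lemma \ref{lemma 3.2} to parameterise the extra freedom by $G\in\G_1(q)$ when needed; summing the multiplicities I expect the combined coefficient to come out to $\chi_{\stufe_0\stufe_2}(q)q^{k-1}$ regardless of $\chi_q$. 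For the contribution to $\E_{\rho'}$, the pair $(M'\ N')$ now has $\rank_qM'=2$, so in passing from $\chi_{\rho}$ to $\chi_{\rho'}$ the $\chi_{(1,q,1)}$-factor gets replaced by $\overline\chi_q(\det M')$, and it is at this moment that the dichotomy $\chi_q=1$ versus $\chi_q\ne1$ enters.

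The part I expect to be the main obstacle is precisely this last character calculation. When $\chi_q=1$ the count of admissible $(G,Y)$ (or whichever auxiliary parameter produces the $\rho'$-output) should give the factor $q^{k-3}(q^2-1)$ in a straightforward way. When $\chi_q$ is the nontrivial quadratic character mod $q$, however, each target $(M'\ N')$ of $\stufe$-type $\rho'$ is hit with weight $\overline\chi_q(\det M')$ summed against a quantity depending on a line in $\F x_1\oplus \F x_2$ equipped with a quadratic form coming from reducing $M'$; invoking the $\mathbb H$ versus $\mathbb A$ dichotomy (as in the $p^2$-type $0,2$ cases of Proposition 3.4), the resulting sum should collapse to a Gauss-sum-type expression that vanishes. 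To be certain, I would verify this by computing the total character-weighted contribution to a fixed $(M'\ N')$ and showing it equals $\sum_{v\in\F^2\smallsetminus\{0\}}\chi_q(Q(v))$ for a nondegenerate binary quadratic form $Q$ arising from $M'$, which is zero for nontrivial $\chi_q$.

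Having checked these two pieces, the two cases of the proposition fall out by combining the $\E_\rho$ contribution (independent of $\chi_q$) with either $\chi_{\stufe_0\stufe_2}(q)q^{k-3}(q^2-1)\E_{\rho'}$ or $0$, according to whether $\chi_q$ is trivial or not. A final sanity check will be to confirm that no other $\stufe$-type output is possible: pairs of $\stufe$-type differing from $\rho$ in some prime $q'\ne q$ are excluded because the representatives for $T(q)$ lie in $\Gamma_0(\stufe/q)$ away from $q$, so the $q'$-reduction of $(M'\ N')$ coincides with that of $(M\ N)$.
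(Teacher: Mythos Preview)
Your overall plan matches the paper's: pass from $(M\ N)$ to $(M'\ N')$, classify by $\rank_qM'$, reverse and count preimages with their character weights. Two corrections are needed, however.

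First, for $q\mid\stufe$ there is no $r=0,1,2$ trichotomy. The coset representatives for $T(q)$ at a prime dividing the level are just $\left(\begin{array}{cc}I&Y\\0&I\end{array}\right)$ with $Y\in\Z^{2,2}_{\sym}$ varying modulo $q$ (compare the set-up in the proof of Proposition~\ref{proposition 3.5}), so only the analogue of Case~III from Proposition~\ref{proposition 3.3} occurs. One writes $(M'\ N')=\left(\begin{array}{cc}1&\\&1/q\end{array}\right)(M\ \ qN+MY)$, checks $(M',N')=1$ with $\rank_qM'\ge 1$, and reverses via $(M\ N)=\left(\begin{array}{cc}1&\\&q\end{array}\right)E\big(M'\ \ (N'-M'Y)/q\big)$ with $E\in\G_1$.

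Second, and more substantively, your proposed mechanism for the vanishing of the $\E_{\rho'}$ contribution when $\chi_q\ne1$ is misdirected. The character that must be summed over preimages is $\chi_{(1,q,1)}(M,N)$; the factor $\overline\chi_q(\det M')$ you focus on is constant for a fixed target and plays no role in the cancellation. No quadratic form or $\mathbb H/\mathbb A$ dichotomy enters here. For a fixed target $(M'\ N')$ with $\rank_qM'=2$ and a fixed $E\in\G_1$, the integrality condition $(n_1\ n_2)\equiv(m_1\ m_2)Y$ leaves one entry of $Y$ free, and as that entry runs over its $q-1$ admissible values (those keeping $(M,N)=1$) the bottom-row entry of $N$ appearing in $\chi_{(1,q,1)}(M,N)$ runs exactly once through $(\Z/q\Z)^\times$. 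Hence $\sum_Y\chi_{(1,q,1)}(M,N)=\sum_{a\in(\Z/q\Z)^\times}\chi_q(a)$, which is $q-1$ if $\chi_q=1$ and $0$ otherwise, already before summing over $E$. Summing over the $q+1$ choices of $E$ then produces the factor $q^2-1$ in the $\chi_q=1$ case. The $\E_\rho$ contribution (targets of $q$-type $1$) comes out uniformly as $\chi_{\stufe_0\stufe_2}(q)q^{k-1}\E_\rho$, as you anticipated.
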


\begin{proof}  Recall that we must have $\chi_q^2=1$ since $q|\stufe_1$.
Take $(M\ N)$ of $\stufe$-type $\rho$ so that $q$ divides row 2 of $M$.  Set
$$(M'\ N')=\left(\begin{array}{cc} 1\\&\frac{1}{q}\end{array}\right) (M\ qN+MY);$$
since $q$ does not divide row 1 of $M$ or row 2 of $N$, $(M',N')=1$.
Also, $\rank_qM'\ge 1$, and
$\chi_{(\stufe_0,\stufe_\frac{1}{q},\stufe_2)}(M',N')
=\chi_{\stufe_0\stufe_2}(q)\chi_{(\stufe_0,\stufe_\frac{1}{q},\stufe_2)}(M,N).$

Reversing, take $(M'\ N')$ of $q$-type 1 or 2, $\stufe/q$-type $(\stufe_0,\stufe_\frac{1}{q},\stufe_2)$; set
$$(M\ N)=\left(\begin{array}{cc} 1\\&q\end{array}\right) E(M'\ (N'-M'Y)/q),$$
$E\in\G_1$.  Write $EM'=\left(\begin{array}{cc} m_1&m_2\\m_3&m_4\end{array}\right)$,
$EN'=\left(\begin{array}{cc} n_1&n_2\\n_3&n_4\end{array}\right)$.

First suppose $\rank_qM'=2$.
To have $N$ integral, we need to choose $Y$ so that
$(n_1\ n_2)\equiv(m_1\ m_2)Y\ (q)$; then to have $(M,N)=1$, we need
$(n_3\ n_4)\not\equiv(m_3\ m_4)Y\ (q)$.  
If $q\nmid m_1$ then $y_4$ can be chosen freely; if $q|m_1$ then $y_1$ can be chosen
freely.  This gives us $q-1$ choices for $Y$.  
Summing over these $Y$, we have
$$\sum_Y\chi_{(1,q,1)}(M,N)=\begin{cases} q-1&\text{if $\chi_q=1$,}\\
0&\text{if $\chi_q\not=1$.}
\end{cases}$$
So the contribution from these terms is
$$\begin{cases} q^{2k-3}\cdot q^{-k}(q^2-1)\E_{\rho'}&\text{if $\chi_q=1$,}\\
0&\text{if $\chi_q\not=1$.}
\end{cases}$$

Now suppose $\rank_qM'=1$; assume $q$ divides row 2 of $M'$.  
To have $\rank_qM=1$,
we cannot have $q$ dividing row 1 of $EM'$; this leaves us $q$ choices for $E$, and
with these choices we have $q$ dividing row 2 of $EM'$.  Then choose $Y$ so that
$(n_1\ n_2)\equiv(m_1\ m_2)Y\ (q)$; we have $q$ choices for $Y$.  Then row 2 of $N$ is
congruent modulo $q$ to $(n_3\ n_4)$, so $(M,N)=1$ and
$$\sum_Y\chi_{(1,q,1)}(M,N)=(q-1)\chi_{(1,q,1)}(M',N').$$
So the contribution from these terms is $\chi_{\stufe_0\stufe_2}(q)q^{k-1}\E_{\rho}$. 
\end{proof}

\begin{prop}\label{proposition 3.7} For $q$ a prime dividing $\stufe_0$, 
set $$\rho'=(\stufe_0/q,q\stufe_1,\stufe_2),\ \rho''=(\stufe_0/q,\stufe_1,q\stufe_2).$$
Then
\begin{align*}
&\E_{\rho}|T(q)\\
&\quad =
{\begin{cases} \chi_{\stufe_1}(q)\chi_{\stufe_2}(q^2)(\E_{\rho}+q^{-1}(q-1)\E_{\rho'}\\
\quad +q^{-1}(q-1)\E_{\rho''})
&\text{if $\chi_q=1$,}\\
\chi_{\stufe_1}(q)\chi_{\stufe_2}(q^2)(\E_{\rho}+\epsilon q^{-2}(q-1)\E_{\rho''})
&\text{if $\chi_2^2=1$, $\chi_q\not=1$,}\\
\chi_{\stufe_1}(q)\chi_{\stufe_2}(q^2)\E_{\rho}&\text{if $\chi_q^2\not=1$.}
\end{cases}}
\end{align*}
\end{prop}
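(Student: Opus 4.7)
Following Propositions 3.5 and 3.6, start from the Hecke expansion
\begin{align*}
2\E_\rho(\tau)|T(q)=q^{2k-3}\sum\overline\chi_{(\stufe_0/q,\stufe_1,\stufe_2)}(M,N)\,\overline\chi_q(\det N)\det(M\tau+qN+MY)^{-k},
\end{align*}
summed over $SL_2(\Z)$-equivalence class representatives $(M\ N)$ of $\stufe$-type $\rho$ and over symmetric $Y$ mod $q$ with $Y\equiv 0\pmod{\stufe/q}$. Since $q\mid\stufe_0$ forces $q\mid M$, set $M=qM_0$; then $M\tau+qN+MY=q(M_0\tau+N+M_0Y)$, and the natural target pair is $(M'\ N'):=(M_0\ N+M_0Y)$, which is always coprime. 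Its $q$-rank $\ell\in\{0,1,2\}$ determines whether its $\stufe$-type is $\rho$, $\rho'$, or $\rho''$, and this classification splits the sum into three contributions.

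Inverting the substitution gives $(M,N)=(qM',\,N'-M'Y)$; this is of $\stufe$-type $\rho$ iff $\rank_q(N'-M'Y)=2$. The substitution $M\mapsto qM'$ introduces a uniform prefactor $\chi_{\stufe_1}(q)\chi_{\stufe_2}(q^2)$ (from $\chi_{(1,q',1)}$ terms at $q'\mid\stufe_1$ and from $\chi_{q'}(q^2)$ at $q'\mid\stufe_2$), while the $q$-component $\overline\chi_q(\det(N'-M'Y))$ is the only part depending on $Y$.

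The case $\ell=0$ is easiest: $q\mid M'$ forces $\det(N'-M'Y)\equiv\det N'\not\equiv 0\pmod q$ for all $q^3$ values of $Y$, yielding $\chi_{\stufe_1}(q)\chi_{\stufe_2}(q^2)\E_\rho$. For $\ell=1$, arrange $q$ to divide row 2 of $M'$; the symmetry identity $m_1'n_3'+m_2'n_4'\equiv 0\pmod q$ makes $\det(N'-M'Y)\pmod q$ an affine function of $Y$ with linear part of rank one, so $\sum_Y\overline\chi_q(\det(N'-M'Y))=q^2(q-1)$ when $\chi_q=1$ and $0$ otherwise, producing the $\E_{\rho'}$ term precisely when $\chi_q=1$.

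The main obstacle is the case $\ell=2$. Here $M'$ is invertible mod $q$, so $\det(N'-M'Y)\equiv\det M'\cdot\det(\overline{M'}N'-Y)\pmod q$; substituting the symmetric variable $Y''=Y-\overline{M'}N'$, the character sum factors as $\overline\chi_q(\det M')\sum_{Y''\text{ sym, }\det Y''\neq 0}\overline\chi_q(\det Y'')$, with the first factor supplying the $q$-part of $\overline\chi_{\rho''}(M',N')$. The remaining sum is evaluated using the point count $\#\{(a,b,c)\in\F_q^3:ac-b^2=t\}=q^2+q\bigl(\tfrac{-t}{q}\bigr)$ for $t\neq 0$; splitting by the quadratic character and applying orthogonality gives $q^2(q-1)$, $\epsilon q(q-1)$, or $0$ according as $\chi_q=1$, $\chi_q^2=1\neq\chi_q$, or $\chi_q^2\neq 1$. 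Dividing by $q^3$ reproduces the three branching $\E_{\rho''}$ coefficients, and assembling the three cases yields the proposition.
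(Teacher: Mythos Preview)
Your argument is correct and follows essentially the same route as the paper: the same reduction to $(M\ N)=(qM'\ N'-M'Y)$, the same split by $\rank_qM'\in\{0,1,2\}$, and the same character prefactor $\chi_{\stufe_1}(q)\chi_{\stufe_2}(q^2)$. The only deviation is in the $\rank_qM'=2$ case, where the paper evaluates $\sum_{Y}\overline\chi_q(\det Y)$ via the $GL_2(\F_q)$-orbit decomposition of invertible symmetric matrices (using the stabilizer sizes $o(I)=2(q-\epsilon)$, $o(J)=2(q+\epsilon)$), whereas you use the fiber count $\#\{ac-b^2=t\}=q^2+q\bigl(\tfrac{-t}{q}\bigr)$ together with character orthogonality; both yield the same values $q^2(q-1)$, $\epsilon q(q-1)$, $0$ in the three character cases.
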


\begin{proof} Take $(M\ N)$ of $\stufe$-type $\rho$, and set
$$(M'\ N')=\frac{1}{q}(M\ qN+MY).$$
Since $\rank_qN=2$, we have $\rank_q(M'\ N')=\rank_q(M'\ N)=2$;
hence $(M',N')=1$.  Also, 
$$\chi_{(\stufe_0/q,\stufe_1,\stufe_2)}(M',N')
=\chi_{\stufe_1}(q)\chi_{\stufe_2}(q^2)\chi_{(\stufe_0/q,\stufe_1,\stufe_2)}(M,N).$$

Reversing, take $(M'\ N')$ of $\stufe/q$-type $(\stufe_0/q,\stufe_1,\stufe_2).$
Set $$(M\ N)=(qM'\ N'-M'Y).$$ Write $M'=\left(\begin{array}{cc} m_1&m_2\\m_3&m_4\end{array}\right)$,
$N'=\left(\begin{array}{cc} n_1&n_2\\n_3&n_4\end{array}\right)$.  We need to choose $Y$ so that $\rank_qN=2$.
Then we sum $\sum_Y\overline\chi_q(\det N)$; since $\chi_q(\det N)=0$ when $\rank_qN<2$,
we can simply sum over all $Y$.

First suppose $\rank_qM'=2$.  Then $\overline M'N'-Y$ varies over all symmetric matrices
modulo $q$ as $Y$ does (here $\overline M'M'\equiv I\ (q)$), and each invertible
symmetric matrix is either in the $|GL_2(\F)$-orbit of $I$ or of $J=\left(\begin{array}{cc} 1\\&\omega\end{array}\right)$,
where $\left(\frac{\omega}{q}\right)=-1$ and $GL_2(\F)$ acts by conjugation.  With $q$ odd and
$G$ varying over $GL_2(\F)$, we have
$$\sum_Y\overline\chi_q(\det N)
=\frac{\chi_q(\det M')}{o(I)}\sum_G\overline\chi_q^2(\det G)
+\frac{\chi_q(\omega\det M')}{o(J)}\sum_G\overline\chi_q^2(\det G)$$
where 
$o(T)=\#\{C\in GL_2(\F):\ ^tCTC=T\ \};$ it is known that $o(I)=2(q-\epsilon)$ and
$o(J)=2(q+\epsilon)$ when $q$ is odd.
Now, $\overline\chi_q^2\circ\det$ is a character on $GL_2(\F)$, and it is
the trivial character if and only if $\chi_q^2=1$.  Hence
$$\sum_Y\overline\chi_q(\det N)=
\begin{cases} q^2(q-1)&\text{if $\chi_q=1$,}\\
\epsilon q(q-1)\chi_q(\det M')&\text{if $\chi_q^2=1$, $\chi_q\not=1$,}\\
0&\text{if $\chi_q^2\not=1$.}
\end{cases}$$
So the contribution from these terms when $q$ is odd is
$$\begin{cases} \chi_{\stufe_1}(q)q^{-1}(q-1)\E_{\rho''}&\text{if $\chi_q=1$,}\\
\epsilon\chi_{\stufe_1}(q) q^{-2}(q-1)\E_{\rho''}&\text{if $\chi_q^2=1$, $\chi_q\not=1$,}\\
0&\text{if $\chi_q^2\not=1$.}
\end{cases}$$
When $q=2$ we have
$\sum_Y\overline\chi_q(\det N)=4$ so the contribution is $\chi_{\stufe_1}(q)q^{-1}(q-1)\E_{\rho''}$.

Now say $\rank_qM'=1$; we can assume $q$ divides row 2 of $M'$.  Choose $E\in SL_2(\Z)$
so that $M'E\equiv \left(\begin{array}{cc} m_1'&0\\0&0\end{array}\right)\ (q)$; by symmetry,
$N'\,^tE^{-1}\equiv\left(\begin{array}{cc} n_1'&n_2'\\0&n_4'\end{array}\right)\ (q).$  Clearly
$E^{-1}Y\,^tE^{-1}$ varies over all symmetric matrices as $Y$ does, so
\begin{align*}
\sum_Y\overline\chi_q(\det N)&=q^2\sum_{u\,(q)}\overline\chi_q((n_1'-m_1'u)n_4')\\
&={\begin{cases} q^2(q-1)&\text{if $\chi_q=1$,}\\0&\text{otherwise.}\end{cases}}
\end{align*}
Thus the contribution from these terms is
$$\begin{cases} q^{-1}(q-1)\E_{\rho'}&\text{if $\chi_q=1$,}\\0&\text{otherwise.}\end{cases}$$

Finally, suppose $\rank_qM'=0$.  Then $\sum_Y\overline\chi_q(\det N)=q^3\overline\chi_q(\det N')$,
so the contribution is $\chi_{\stufe_1}(q)\chi_{\stufe_2}(q^2)\E_{\rho}.$ 
\end{proof}

\begin{prop}\label{proposition 3.8}  For $q$ a prime dividing $\stufe_2$, we have
$$\E_{\rho}|T_1(q^2)=\chi_{\stufe_0}(q^2)(q+1)q^{2k-3}\E_{\rho}.$$
\end{prop}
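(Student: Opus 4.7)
The plan is to mirror the structure of the proof of Proposition 3.4, using the explicit upper-triangular coset representatives for $T_1(q^2)$ from Proposition 2.1 of [4], adapted to the level $\stufe$ setting. I expect a dramatic simplification compared to Proposition 3.4 because $q\mid\stufe_2$ forces $\rank_q M=2$ for every $(M\ N)$ of $\stufe$-type $\rho$, and the image pair $(M'\ N')$ must also satisfy $\rank_q M'=2$ in order to support $\E_\rho$.

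First I would expand $2\E_\rho(\tau)|T_1(q^2)$ as a sum over representatives $(M\ N)$ of $\stufe$-type $\rho$, over $G\in\G_1(q)$, and over the parameters $Y,y$ from the Hecke decomposition, reproducing the three cases $r_0=1$, $r_1=1$, $r_2=1$ of Proposition 3.4. Then I would rule out Cases II and III: in Case III the substitution $M\mapsto M\left(\begin{array}{cc} q\\&1\end{array}\right)$ drops $\rank_q M'$ to $1$, and the surviving subcases of Case II similarly force $\rank_q M'\le 1$; in neither case does the image land in the orbit supporting $\E_\rho$. Only the analog of Case I\,a of Proposition 3.4 contributes, namely the subcase in which $M\left(\begin{array}{cc} 1/q\\&1\end{array}\right)$ is non-integral and the adjusted pair
$$(M'G\ N'\,^tG^{-1})=\left(\begin{array}{cc} q\\&1\end{array}\right)\left(M\left(\begin{array}{cc} 1/q\\&1\end{array}\right)\ \ N\left(\begin{array}{cc} q\\&1\end{array}\right)+M\left(\begin{array}{cc} 1/q\\&1\end{array}\right) Y\right)$$
has $\rank_q M'=2$.

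For this surviving case I would verify the character identity yielding the factor $\chi_{\stufe_0}(q^2)$ (tracing through $\chi_\rho$ prime-by-prime, using that at $q\mid\stufe_2$ the $q$-part of $\chi_\rho$ is $\overline\chi_q(\det M)$ and that the transformation preserves $\det M$ modulo $q$); then in the reversal $(M'\ N')\mapsto(M\ N)$ I would check, just as in Case I\,a of Proposition 3.4, that for each of the $q+1$ choices of $G\in\G_1$ there is a unique $Y$ making $(M\ N)$ integral, coprime, and of $\stufe$-type $\rho$; finally I would collect the prefactor $q^{k-3}\cdot q^k=q^{2k-3}$ arising from the $T_1(q^2)$ normalization together with the weight-$k$ action. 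The main obstacle is the careful bookkeeping needed to justify the vanishing of Cases II and III rigorously and to verify the character identity in the surviving case, since $\chi_\rho$ is defined piecewise at each prime dividing $\stufe$ and the prefactor $\left(\begin{array}{cc} q\\&1\end{array}\right)$ interacts nontrivially with the $q$-local data. Once these are in hand, summing the $q+1$ equal contributions of the surviving case yields $\chi_{\stufe_0}(q^2)(q+1)q^{2k-3}\E_\rho$ directly.
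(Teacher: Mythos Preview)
Your treatment of the surviving case (the analogue of Case~I\,a) is correct and essentially what the paper does: since $q\mid\stufe_2$ forces $\rank_q M=2$, the matrix $M\left(\begin{smallmatrix}1/q&\\&1\end{smallmatrix}\right)$ is never integral, only Case~I\,a occurs, and the reversal count gives exactly $q+1$ with the character factor $\chi_{\stufe_0}(q^2)$.

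However, your justification for discarding Cases~II and~III is both unnecessary and incomplete. You argue that in those cases $\rank_q M'\le 1$, so ``the image does not land in the orbit supporting $\E_\rho$.'' But that only shows these cases do not contribute to the $\E_\rho$-component; it does \emph{not} show they contribute zero. If they gave nonzero contributions to some $\E_{\rho'}$ or $\E_{\rho''}$ (with $q$ moved to the $\stufe_1$- or $\stufe_0$-slot), those terms would appear on the right-hand side and the proposition as stated would be false. Your rank argument alone cannot rule this out.

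The correct and immediate reason is the one implicit in the paper's proof: look back at Cases~II and~III in Proposition~3.4 and note that every summand there carries an explicit prefactor $\chi(p)$ (Case~II) or $\chi(p^2)$ (Case~III). Since $\chi$ is a character modulo $\stufe$ and $q\mid\stufe$, we have $\chi(q)=0$, so both cases vanish identically before any rank analysis. This is why the paper's proof of the proposition writes down only the $r_0=1$ terms from the outset. Replace your rank discussion with this one-line observation and the proof goes through.
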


\begin{proof} From Proposition 2.1 of \cite{HW}, we know $2\E_{\rho}(\tau)|T_1(q^2)$ is the sum of terms
\begin{align*}
&p^{k-3}\overline\chi_{\rho}(M,N)\cdot\\
&\quad\det(M\left(\begin{array}{cc} \frac{1}{q}\\&1\end{array}\right) G^{-1}\tau
+N\left(\begin{array}{cc} q\\&1\end{array}\right)\,^tG+M\left(\begin{array}{cc} \frac{1}{q}\\&1\end{array}\right) Y\,^tG)^{-k}
\end{align*}
where $(M\ N)$ varies over $SL_2(\Z)$-equivalence class representatives
of pairs of $\stufe$-type $\rho$,
$G\in\G_1$, $Y=\left(\begin{array}{cc} y_1&y_2\\y_2&0\end{array}\right)$ with $y_1$ varying modulo $q^2$,
$y_2$ modulo $q$.  Recall that we can choose $G\equiv I\ (\stufe/q)$ and 
$Y\equiv0\ (\stufe/q).$

We know $M\left(\begin{array}{cc} \frac{1}{q}\\&1\end{array}\right)$ is never integral.  Adjust the equivalence
class representative for $(M\ N)$ so that
\begin{align*}
&(M'G\ N'\,^tG^{-1})=\\
&\quad\left(\begin{array}{cc} 1\\&q\end{array}\right)
\left(M\left(\begin{array}{cc} \frac{1}{q}\\&1\end{array}\right)\ N\left(\begin{array}{cc} q\\&1\end{array}\right)+M\left(\begin{array}{cc} \frac{1}{q}\\&1\end{array}\right)
Y\right)
\end{align*}
is an integral pair.  Since $\rank_qM'=2$, we have $(M',N')=1$.  Also,
$$\chi_{(\stufe_0,\stufe_1,\stufe_2/q)}(M',N')
=\chi_{\stufe_0\stufe_1}(q^2)\chi_{(\stufe_1,\stufe_1,\stufe_2/q)}(M,N),$$
and we know $\chi_{\stufe_1}^2=1$.

Reversing, choose $(M'\ N')$ of $\stufe$-type $\rho$, and set
\begin{align*}
&(M\ N)=\\
&\quad
\left(\begin{array}{cc} 1\\&\frac{1}{q}\end{array}\right) E\left(M'G\left(\begin{array}{cc} q\\&1\end{array}\right)
\ (N'\,^tG^{-1}-M'GY)\left(\begin{array}{cc} \frac{1}{q}\\&1\end{array}\right)\right),
\end{align*}
where $E\in\,^t\G_1$.  Write $EM'G=\left(\begin{array}{cc} m_1&m_2\\m_3&m_4\end{array}\right)$,
$EN'\,^tG^{-1}=\left(\begin{array}{cc} n_1&n_2\\n_3&n_4\end{array}\right)$.

For each choice of $E$, we need to choose the unique $G$ so that $q|m_4$
to ensure $M$ is integral.  Thus $q\nmid m_2m_3$.  Choose $y_2$ so that $n_4\equiv m_3y_2\ (q)$;
then choose $y_q$ so that $n_3\equiv m_3y_1+m_4y_2\ (q^2)$.  By symmetry, $m_1n_3+m_2n_4\equiv m_3n_1\ (q).$
Thus $$m_3n_1\equiv m_3m_3y_1+m_2m_3y_2\ (q),$$
so $n_1\equiv m_1y_1+m_2y_2\ (q)$, and hence $N$ is integral.  Also, $\det M=\det M'$,
so $\chi_{\rho}(M',N')=\chi_{\stufe_0}(q^2)\chi_{\rho}(M,N).$ 
\end{proof}

\begin{prop}\label{proposition 3.9}  For $q$ a prime dividing $\stufe_1$,
set $\rho'=(\stufe_0,\stufe_\frac{1}{q},q\stufe_2)$.  Then
$$\E_{\rho}|T_1(q^2)
={\begin{cases}
(\chi_{\stufe_0}(q^2)q^{2k-2}+\chi_{\stufe_2}(q^2)q)\E_{\rho}\\
\quad +q^{-1}(\chi_{\stufe/q}(q)q^{k-2}+\chi_{\stufe_2}(q^2))(q^2-1)\E_{\rho'}
&\text{if $\chi_q=1$,}\\
(\chi_{\stufe_0}(q^2)q^{2k-2}+\chi_{\stufe_2}(q^2)q)\E_{\rho}&\text{if $\chi_q\not=1$.}
\end{cases}}$$
\end{prop}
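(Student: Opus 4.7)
The plan is to follow the template of Propositions \ref{proposition 3.4} and \ref{proposition 3.8}. By Proposition 2.1 of \cite{HW}, the action of $T_1(q^2)$ on $\E_\rho$ expands as a sum of Case I form summands, as in the proof of Proposition \ref{proposition 3.8}:
$$q^{k-3}\overline\chi_\rho(M,N)\det\left(M\left(\begin{array}{cc} \frac{1}{q}\\&1\end{array}\right) G^{-1}\tau+N\left(\begin{array}{cc} q\\&1\end{array}\right)\,^tG+M\left(\begin{array}{cc} \frac{1}{q}\\&1\end{array}\right) Y\,^tG\right)^{-k},$$
where $(M\ N)$ varies over $SL_2(\Z)$-equivalence class representatives of pairs of $\stufe$-type $\rho$, $G\in\G_1$, $Y=\left(\begin{array}{cc} y_1&y_2\\y_2&0\end{array}\right)$ with $y_1$ modulo $q^2$ and $y_2$ modulo $q$, and we may take $G\equiv I$ and $Y\equiv 0\pmod{\stufe/q}$.

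Since $q\mid\stufe_1$, we have $\rank_q M=1$, and I would choose the representative $(M\ N)$ so that $q$ divides row 2 of $M$ (and hence, by Lemma \ref{lemma 3.1}, $q$ divides row 1 of $N$). The next step is to split into three sub-cases parallel to Ia, Ib, Ic of Proposition \ref{proposition 3.4}, according to whether $M\left(\begin{array}{cc} \frac{1}{q}\\&1\end{array}\right)$ is integral (equivalently, whether $q\mid m_1$) and to the rank of the combined middle pair $\left(M\left(\begin{array}{cc} \frac{1}{q}\\&1\end{array}\right)\ N\left(\begin{array}{cc} q\\&1\end{array}\right)+M\left(\begin{array}{cc} \frac{1}{q}\\&1\end{array}\right) Y\right)$; in each sub-case one adjusts the equivalence class representative by scaling on the left by $\diag(q,1)$, the identity, or $\diag(1,\frac{1}{q})$ to produce a coprime integral pair $(M'\ N')$. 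Depending on the sub-case, the resulting pair has $\stufe$-type $\rho$ (when $\rank_q M'=1$) or $\stufe$-type $\rho'$ (when $\rank_q M'=2$); I would then track the ratio $\chi_\rho(M',N')/\chi_\rho(M,N)$ using the decomposition of $\chi_\rho$ from Section 2, and reverse the computation to count, for each target $(M'\ N')$ of either type, the integral coprime pairs $(M\ N)$ together with the admissible $(G,Y)$ that produce it, using Lemmas \ref{lemma 3.1} and \ref{lemma 3.2}.

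Collecting contributions, the coefficient $\chi_{\stufe_0}(q^2)q^{2k-2}+\chi_{\stufe_2}(q^2)q$ of $\E_\rho$ should emerge from the sub-cases in which $\rank_q M'=1$ — the non-integral sub-case contributing $\chi_{\stufe_0}(q^2)q^{2k-2}$ via the $\diag(q,1)$ scaling, and the sub-case with reduced combined rank contributing $\chi_{\stufe_2}(q^2)q$ via the $\diag(1,\frac{1}{q})$ scaling. The $\E_{\rho'}$ coefficient aggregates the sub-cases in which $\rank_q M'=2$; in these sub-cases, after fixing $G$ to make $N$ integral, one still sums $\chi_{(1,q,1)}(M,N)$ over the remaining free entries of $Y$, and these sums reduce to Gauss-type sums $\sum_y\chi_q(\alpha+\beta y)$ over $\F$, which vanish unless $\chi_q$ is trivial; this is the mechanism producing the stated $\chi_q=1$ versus $\chi_q\not=1$ dichotomy. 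The main obstacle will be the careful sub-case bookkeeping and the evaluation of these character sums — in particular, verifying that the two contributions to $\E_{\rho'}$ combine into $q^{-1}(\chi_{\stufe/q}(q)q^{k-2}+\chi_{\stufe_2}(q^2))(q^2-1)$, which requires tracking how the character factors from each sub-case's diagonal scaling interact with the surviving $Y$-sum. As in the proofs of Propositions \ref{proposition 3.6} and \ref{proposition 3.7}, this is where the precise numerology crystallises, and I would expect this to be the most delicate step of the argument.
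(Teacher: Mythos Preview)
Your proposal is correct and follows essentially the same approach as the paper's proof: the same three sub-cases (non-integral; integral with full combined rank; integral with reduced combined rank), the same diagonal scalings $\diag(q,1)$, $I$, $\diag(1,\tfrac{1}{q})$, the same reversal-and-count arguments, and the same character-sum mechanism over the free $Y$-entries producing the $\chi_q=1$ versus $\chi_q\neq 1$ dichotomy. One minor slip: your parenthetical claim that Lemma~\ref{lemma 3.1} yields ``$q$ divides row 1 of $N$'' is incorrect (and is not needed for the argument).
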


\begin{proof} Recall that we must have $\chi_q^2=1$ since $q|\stufe_1$.
Take $(M\ N)$ of $\stufe$-type $\rho$; assume $q$ divides row 2 of $M'$.

First suppose $M\left(\begin{array}{cc} \frac{1}{q}\\&1\end{array}\right)$ is not integral;
thus by symmetry, $$N=\left(\begin{array}{cc} *&*\\*&n_4\end{array}\right)$$ where $q\nmid n_4$.
Thus $(M',N')=1$ where
\begin{align*}
&(M'G\ N'\,^tG^{-1})=\\
&\left(\begin{array}{cc} q\\&1\end{array}\right)
\left(M\left(\begin{array}{cc} \frac{1}{q}\\&1\end{array}\right)\ 
N\left(\begin{array}{cc} q\\&1\end{array}\right)+M\left(\begin{array}{cc} \frac{1}{q}\\&1\end{array}\right) Y\right).
\end{align*}
We know $q\nmid M'$ and $q|\det M'$, so $\rank_qM'=1$. Also, $\chi_{(\stufe_0,\stufe_\frac{1}{q},\stufe_2)}(M',N')
=\chi_{\stufe_0}(q^2)\chi_{(\stufe_0,\stufe_\frac{1}{q},\stufe_2)}(M,N).$

Reversing, take $(M'\ N')$ of $\stufe$-type $\rho$; assume $q$ divides row 2 of $M'$.  Set
$$(M\ N)=\left(\begin{array}{cc} \frac{1}{q}\\&1\end{array}\right) E
\left(M'G\left(\begin{array}{cc} q\\&1\end{array}\right)\ (N'\,^tG^{-1}-M'GY)\left(\begin{array}{cc} \frac{1}{q}\\&1\end{array}\right)\right)$$
where $E\in\G_1$.  Write $EM'G=\left(\begin{array}{cc} m_1&m_2\\m_3&m_4\end{array}\right)$,
$EN'\,^tG^{-1}=\left(\begin{array}{cc} n_1&n_2\\n_3&n_4\end{array}\right)$.  For 1 choice of $E$, $q$ divides row 1 of $EM'$;
then to have $N$ integral we need $q|(n_1\ n_2)$, which is impossible since $(M',N')=1$.
So choose $E$ so that $q$ does not divide row 1 of $EM'$ ($q$ choices for $E$).  Thus
$q$ divides row 2 of $EM'$; to have $M$ integral, choose the unique $G$ so that
$q|m_2$; so $q\nmid m_1$, and by symmetry, $q|n_3$ (so $q\nmid n_4$).  Choose the unique
$y_2\ (q)$ so that $n_2\equiv m_1y_2\ (q)$ and choose the unique $y_1\ (q^2)$ so that
$n_1\equiv m_1y_1+m_2y_2\ (q^2)$.  Thus $(M,N)=1$, and
$\chi_{\rho}(M',N')=\chi_{\rho}(M,N)$.  So the contribution from these terms is
$\chi_{\stufe_0}(q^2)q^{2k-2}\E_{\rho}$.

Now suppose $M\left(\begin{array}{cc} \frac{1}{q}\\&1\end{array}\right)$ is integral
with $$\rank_q\left(M\left(\begin{array}{cc} \frac{1}{q}\\&1\end{array}\right)\ N\left(\begin{array}{cc} q\\&1\end{array}\right)\right)=2.$$
Thus
$M\equiv\left(\begin{array}{cc} m_1&m_2\\m_3&m_4\end{array}\right)\ (q)$ where 
$q|m_1,m_3,m_4$, $q\nmid m_2$.
By symmetry, $q$ divides row 2 of $N\left(\begin{array}{cc} q\\&1\end{array}\right).$  So
$\rank_q\left(M\left(\begin{array}{cc} \frac{1}{q}\\&1\end{array}\right)\ N\left(\begin{array}{cc} q\\&1\end{array}\right)\right)\ge 1$.
In the case this rank is 2, we have $q^2\nmid m_3$, and $\rank_qM'=2$ where
$$(M'G\ N'\,^tG^{-1})=\left(M\left(\begin{array}{cc} \frac{1}{q}\\&1\end{array}\right)\ N\left(\begin{array}{cc} q\\&1\end{array}\right)
+M\left(\begin{array}{cc} \frac{1}{q}\\&1\end{array}\right) Y\right)$$
Also, $\chi_{(\stufe_0,\stufe_\frac{1}{q},\stufe_2)}(M',N')
=\chi_{\stufe/q}(q)\chi_{(\stufe_0,\stufe_\frac{1}{q},\stufe_2)}(M,N)$.

Reversing, first choose $(M'\ N')$ of $\stufe$-type $(\stufe_0,\stufe_\frac{1}{q},q\stufe_2)$.  Set
$$(M\ N)=\left(M'G\left(\begin{array}{cc} q\\&1\end{array}\right)\ (N'\,^tG^{-1}-M'GY)\left(\begin{array}{cc} \frac{1}{q}\\&1\end{array}\right)\right).$$
Write $M'G=\left(\begin{array}{cc} m_1&m_2\\m_3&m_4\end{array}\right)$, $N'\,^tG^{-1}=\left(\begin{array}{cc} n_1&n_2\\n_3&n_4\end{array}\right)$.
Suppose $\rank_qM'=2$.  Then for each $G$, adjust the equivalence class representative
so that $q|m_4$ (so $q\nmid m_2m_3$).  Take $u,y_2$ so that
$$\overline M'\left(\begin{array}{cc} n_1\\n_3\end{array}\right)\equiv\left(\begin{array}{cc} u\\y_2\end{array}\right)\ (q);$$
set $y_1=u+qu'$ where $u'$ varies modulo $q$.  Then summing over corresponding $Y$, with
$n_3'=(n_3-m_3u-m_4y_2)/q$, we have
\begin{align*}
\sum_Y\chi_{(1,q,1)}(M,N)
&=\sum_{u'}\chi_q(-m_2)\chi_q(n_3'-m_3u')\\
&={\begin{cases} (q-1)&\text{if $\chi_q=1$,}\\0&\text{otherwise.}\end{cases}}
\end{align*}
Thus the contribution from these terms is
$\chi_{\stufe/q}(q)q^{k-3}(q^2-1)\E_{\rho'}$ if $\chi_q=1$, and 0 otherwise.

Suppose $M\left(\begin{array}{cc} \frac{1}{q}\\&1\end{array}\right)$ is integral, 
$\rank_q\left(M\left(\begin{array}{cc} \frac{1}{q}\\&1\end{array}\right)\ N\left(\begin{array}{cc} q\\&1\end{array}\right)\right)=1.$
Since $q\nmid m_2n_3$, $(M'\ N')$ is an integral coprime pair where
\begin{align*}
&(M'G\ N'\,^tG^{-1})=\\
&\quad\left(\begin{array}{cc} 1\\&\frac{1}{q}\end{array}\right) 
\left(M\left(\begin{array}{cc} \frac{1}{q}\\&1\end{array}\right)\ N\left(\begin{array}{cc} q\\&1\end{array}\right)+M\left(\begin{array}{cc} \frac{1}{q}\\&1\end{array}\right) Y\right).
\end{align*}
Note that $\rank_qM'\ge 1$.  So 
$$\chi_{(\stufe_0,\stufe_\frac{1}{q},\stufe_2)}(M',N')
=\chi_{\stufe_2}(q^2)\chi_{(\stufe_0,\stufe_\frac{1}{q},\stufe_2)}(M,N).$$

Reversing, take $(M'\ N')$ of $\stufe/q$-type $(\stufe_0,\stufe_\frac{1}{q},\stufe_2)$, $\rank_qM'\ge 1$.
Set
$$(M N)=\left(\begin{array}{cc} 1\\&q\end{array}\right) E\left(
M'G\left(\begin{array}{cc} q\\&1\end{array}\right)\ (N'\,^tG^{-1}-M'GY)\left(\begin{array}{cc} \frac{1}{q}\\&1\end{array}\right)\right),$$
$E\in\G_1$.  Write $EM'G=\left(\begin{array}{cc} m_1&m_2\\m_3&m_4\end{array}\right)$,
$EN'\,^tG^{-1}=\left(\begin{array}{cc} n_1&n_2\\n_3&n_4\end{array}\right)$.

To have $\rank_qM=1$, we need to choose $E,G$ so that $q\nmid m_2$, and to have $N$
integral with $(M,N)=1$, we need to choose $Y$ so that
$n_1\equiv m_1y_1+m_2y_2\ (q)$, $n_3\not\equiv m_3y_1+m_4y_2\ (q).$

First suppose $\rank_qM'=2$.  To have $\rank_qM=1$, for each $E$ we need to choose
$G$ so that $q\nmid m_2$ (for each $E$, this gives us $q$ choices for $G$).
If $q\nmid m_3$ then we choose $y_1$ freely; if $q|m_3$ then $q\nmid m_1m_4$, so we can
choose $y_2$ freely (subject to $n_3\not\equiv m_4y_2\ (q)$).  In either case, we get
\begin{align*}
\sum_Y\chi_{(1,q,1)}(M,N)&=\chi_q(-m_2)\sum_Y\chi_q(n_3-m_3y_1-m_4y_2)\\
&={\begin{cases} q(q-1)&\text{if $\chi_q=1$,}\\0&\text{otherwise.}\end{cases}}
\end{align*}
So the contribution from these terms is
$\chi_{\stufe_2}(q^2)q^{-1}(q^2-1)\E_{\rho'}$ if $\chi_q=1$, 0 otherwise.

Finally, suppose $\rank_qM'=1$; assume $q$ divides row 2 of $M'$.  We have
$q$ choices for $E$ so that $q$ does not divide row 1 of $EM'$
(and then $q$ divides row 2 of $EM'$); then we have $q$ choices for $G$ so that
$q\nmid m_2$.  By symmetry, $q\nmid n_3$.  Choose $y_1$ freely, then choose $y_2$ so
that $n_1\equiv m_1y_1+m_2y_2\ (q)$.  
So the contribution from these terms is $\chi_{\stufe_2}(q^2)q\E_1$. 
\end{proof}

\begin{prop}\label{proposition 3.10}  Let $q$ be a prime dividing $\stufe_0$.
Set 
$$\rho'=(\stufe_0/q,q\stufe_1,\stufe_2),\ \rho''=(\stufe_0/q,\stufe_1,q\stufe_2).$$
Then
\begin{align*}
&\E_{\rho}|T_1(q^2)\\
&\quad={\begin{cases} 
\chi_{\stufe_2}(q^2)(q+1)\E_{\rho}\\
\quad + q^{-1}(\chi_{\stufe/q}(q)q^{k-1}+\chi_{\stufe_2}(q^2))(q-1)\E_{\rho'}\\
\quad +\chi_{\stufe_2}(q^2)q^{-2}(q^2-1)\E_{\rho''}
&\text{if $\chi_q=1$,}\\
\chi_{\stufe_2}(q^2)(q+1)\E_{\rho}
+\epsilon \chi_{\stufe_2}(q^2)q^{-2}(q^2-1)\E_{\rho''}
&\text{if $\chi_q^2=1$, $\chi_q\not=1$,}\\
\chi_{\stufe_2}(q^2)(q+1)\E_{\rho}
&\text{if $\chi_q^2\not=1$.}
\end{cases}}
\end{align*}
\end{prop}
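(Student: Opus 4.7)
The plan is to adapt the template of the preceding propositions (particularly 3.4 and 3.9) to the case $q \mid \stufe_0$. Using Proposition 2.1 of \cite{HW}, I would first expand $2\E_\rho(\tau)|T_1(q^2)$ as a sum of terms
\begin{equation*}
q^{k-3}\,\overline\chi_\rho(M,N)\,\det\bigl(M\diag(1/q,1)G^{-1}\tau + N\diag(q,1)\,^tG + M\diag(1/q,1)Y\,^tG\bigr)^{-k},
\end{equation*}
indexed by $SL_2(\Z)$-equivalence class representatives $(M\ N)$ of $\stufe$-type $\rho$, by $G\in\G_1=\G_1(q)$, and by symmetric $Y$ with $y_1$ varying modulo $q^2$ and $y_2$ modulo $q$ (chosen to vanish modulo $\stufe/q$). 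Since $q\mid\stufe_0$ forces $M\equiv 0\pmod q$, the matrix $M\diag(1/q,1)$ is automatically integral, so the ``not integral'' subcase that appeared in Propositions 3.4 and 3.9 does not arise here; this simplifies the case analysis.

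Next I would introduce the transformed pair $(M'G\ N'\,^tG^{-1})$ by absorbing the appropriate diagonal scalings into the equivalence-class representative, and split the computation by the rank of $(M'\ N')$ modulo $q$ together with the value of $\rank_q M'$. Three output types occur: $\rank_q M'=0$ preserves the $\stufe$-type and contributes to $\E_\rho$; $\rank_q M'=1$ produces a pair of $\stufe$-type $\rho'$ and contributes to $\E_{\rho'}$; and $\rank_q M'=2$ produces a pair of $\stufe$-type $\rho''$ and contributes to $\E_{\rho''}$. For each subcase I would carry out the reversing argument: parametrize pairs of the appropriate new $\stufe$-type and count the tuples $(E,G,Y)$ producing an integral coprime $(M\ N)$ with $M\equiv 0\pmod q$, while tracking the character change, using that for $q\mid\stufe_0$ the $q$-factor of $\chi_\rho$ is $\chi_q(\det N)$, whereas for $q\mid\stufe_2$ (relevant for $\rho''$) it becomes $\overline\chi_q(\det M)$.

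The main obstacle is the character sum $\sum_Y \overline\chi_q(\det N)$ arising in the $\rank_q M'=2$ subcase. As $Y$ ranges over symmetric matrices modulo $q$, $N$ traces out a coset of the full space of $2\times 2$ symmetric matrices modulo $q$, and the sum decomposes along the two $GL_2(\F)$-conjugation orbits of invertible symmetric matrices, represented for odd $q$ by $I$ and $J=\langle 1,\omega\rangle$, with stabilizer sizes $o(I)=2(q-\epsilon)$ and $o(J)=2(q+\epsilon)$. This is exactly the orbit calculation already carried out in Proposition 3.7, and it produces the trichotomy in the statement: when $\chi_q=1$ the sum counts invertible symmetric matrices and yields the $\chi_{\stufe_2}(q^2)q^{-2}(q^2-1)\E_{\rho''}$ term; when $\chi_q^2=1$ but $\chi_q\neq 1$ the orbit weights produce the factor $\epsilon$; when $\chi_q^2\neq 1$ the character sum over each orbit vanishes. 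Analogously, the rank-$1$ contribution to $\E_{\rho'}$ reduces to a one-variable sum of $\overline\chi_q$ that is nonzero only when $\chi_q=1$, with two distinct subcases (one arising naturally when $\rank_q M\diag(1/q,1)=1$, and one from a rank drop in the combined pair) combining to give the coefficient $q^{-1}(\chi_{\stufe/q}(q)q^{k-1}+\chi_{\stufe_2}(q^2))(q-1)$. The $q=2$ case is arithmetically simpler since $\chi_q=1$ automatically. Summing all subcase contributions and simplifying yields the three-case formula.
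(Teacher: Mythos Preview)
Your overall architecture matches the paper's proof: you correctly observe that $q\mid\stufe_0$ makes $M\diag(1/q,1)$ integral so only the $r_0=1$ summands from \cite{HW} survive, you split by whether $(M'\ N')$ is coprime as-is or needs the extra $\diag(1,1/q)$ rescaling, and you correctly identify that the two $\E_{\rho'}$ contributions (one from each subcase) combine into the coefficient $q^{-1}(\chi_{\stufe/q}(q)q^{k-1}+\chi_{\stufe_2}(q^2))(q-1)$, each vanishing unless $\chi_q=1$ via a one-variable character sum.

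There is, however, a concrete error in your treatment of the $\rank_qM'=2$ subcase. You write that ``as $Y$ ranges over symmetric matrices modulo $q$, $N$ traces out a coset of the full space of $2\times2$ symmetric matrices'' and that the resulting sum is ``exactly the orbit calculation already carried out in Proposition~3.7.'' That is not what happens for $T_1(q^2)$: here $Y=\left(\begin{smallmatrix}y_1&y_2\\y_2&0\end{smallmatrix}\right)$ with $y_1$ modulo $q^2$ and $y_2$ modulo $q$, and in the reversing step the integrality constraints force $G$ and the residue of $y_1$ modulo $q$, leaving only $u'$ (the second digit of $y_1$) and $y_2$ free. After using the symmetry relation to eliminate $n_3$, one finds $\det N\equiv -m_1m_4(\overline{m_1}n_2-y_2)^2\pmod q$, so the sum is
\[
\sum_Y\overline\chi_q(\det N)=q\,\overline\chi_q(-m_1m_4)\sum_{y_2\,(q)}\overline\chi_q^2(\overline{m_1}n_2-y_2),
\]
a one-variable $\chi_q^2$-sum rather than the two-orbit count of Proposition~3.7. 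The trichotomy in $\chi_q$ still emerges (and the $\epsilon$ comes from $\chi_q(-1)$ applied to the residual $\chi_q(-m_1m_4)$ factor after matching with $\overline\chi_q(\det M')$), but the mechanism is different from what you describe, and invoking the orbit decomposition here would not go through.
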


\begin{proof} 
Take $(M\ N)$ of $\stufe$-type $\rho$.  So $\rank_qM=0$,
$\rank_qM\left(\begin{array}{cc} \frac{1}{q}\\&1\end{array}\right)\le 1$.

First suppose $(M',N')=1$ where
$$(M'G\ N'\,^tG^{-1})
=\left(M\left(\begin{array}{cc} \frac{1}{q}\\&1\end{array}\right)\ 
N\left(\begin{array}{cc} q\\&1\end{array}\right)+M\left(\begin{array}{cc} \frac{1}{q}\\&1\end{array}\right) Y\right).$$
Since $\rank_qN\left(\begin{array}{cc} q\\&1\end{array}\right)=1$, we must have $\rank_qM'=1$.
Also, 
$$\chi_{(\stufe_0/q,\stufe_1,\stufe_2)}(M',N')
=\chi_{\stufe/q}(q)\chi_{(\stufe_0/q,\stufe_1,\stufe_2)}(M,N).$$

Reversing, take $(M'\ N')$ of $\stufe$-type $\rho'$; set
$$(M\ N)=\left(M'G\left(\begin{array}{cc} q\\&1\end{array}\right)
\ (N'\,^tG^{-1}-M'GY)\left(\begin{array}{cc} \frac{1}{q}\\&1\end{array}\right)\right).$$
Write $M'G=\left(\begin{array}{cc} m_1&m_2\\m_3&m_4\end{array}\right)$,
$N'\,^tG^{-1}=\left(\begin{array}{cc} n_1&n_2\\n_3&n_4\end{array}\right)$; we can assume that
$q|(m_3\ m_4)$.  To have $q|M$, choose the unique $G$ so that $q|m_2$; then
$q\nmid m_1$, and by symmetry $q|n_3$ (so $q\nmid n_4$).  To have $N$ integral,
choose $u$ so that $n_1\equiv m_1u\ (q)$ and set $y_1=u+qu'$.  
For each choice of $u',y_2$, we have $\det N\equiv x-m_1n_4u'\ (q)$ where
$x$ does not depend on $u'$.  Hence, fixing $y_2$,
$$\sum_{u'\,(q)}\overline\chi_q(x-m_1n_4u')=
\begin{cases} (q-1)&\text{if $\chi_q=1$,}\\ 0&\text{otherwise.}\end{cases}$$
So, letting $y_2$ vary modulo $q$, we see the contribution from these terms is
$$\begin{cases} \chi_{\stufe/q}(q)q^{k-3}\cdot q(q-1)\E_{\rho'}&\text{if $\chi_q=1$,}\\
0&\text{otherwise.}\end{cases}$$

Now say $\rank_q\left(M\left(\begin{array}{cc} \frac{1}{q}\\&1\end{array}\right)\ N\left(\begin{array}{cc} q\\&1\end{array}\right)\right)=1;$
adjust the equivalence class representative $(M\ N)$ as necessary so that
\begin{align*}
&(M'G\ N'\,^tG^{-1})=\\
&\quad\left(\begin{array}{cc} 1\\&\frac{1}{q}\end{array}\right)\left(M\left(\begin{array}{cc} \frac{1}{q}\\&1\end{array}\right)
\ N\left(\begin{array}{cc} q\\&1\end{array}\right)+M\left(\begin{array}{cc} \frac{1}{q}\\&1\end{array}\right) Y\right)
\end{align*}
is integral.  Then $(M',N')=1$ since $\rank_qN'=2$.  Also,
$$\chi_{(\stufe_0/q,\stufe_1,\stufe_2)}(M',N')
=\chi_{\stufe_2}(q^2)\chi_{(\stufe_0/q,\stufe_1,\stufe_2)}(M,N).$$

Reversing, take $(M'\ N')$ of $\stufe/q$-type $(\stufe_0/q,\stufe_1,\stufe_2)$, and set
$$(M\ N)=\left(\begin{array}{cc} 1\\&q\end{array}\right) E
\left(M'G\left(\begin{array}{cc} q\\&1\end{array}\right)\ (N'\,^tG^{-1}-M'GY)\left(\begin{array}{cc} \frac{1}{q}\\&1\end{array}\right)\right),$$
$E\in\G_1$.  Write $EM'G=\left(\begin{array}{cc} m_1&m_2\\m_3&m_4\end{array}\right)$,
$EN'\,^tG^{-1}=\left(\begin{array}{cc} n_1&n_2\\n_3&n_4\end{array}\right).$

Say $\rank_qM'=2$.  To have $q|M$, for each $E$ we need to choose the unique $G$ so that
$q|m_2$; thus $q\nmid m_1m_4$.  To have $N$ integral, choose $u$ so that $n_1\equiv m_1u\ (q);$
set $y_1=u+qu'$, $u'$ varying modulo $q$.  By symmetry, $m_1n_3\equiv m_3n_1+m_4n_2\ (q),$
so $n_3\equiv\overline m_1m_3n_1+\overline m_1m_4n_2\ (q);$ hence
$\det N\equiv -m_1m_4(\overline m_1n_2-y_2)^2\ (q)$.  Thus summing over $Y$ where
$y_1=u+qu'$,
\begin{align*} \sum_Y\overline\chi_q(\det N)&=q\overline\chi_q(-m_1m_4)
\sum_{y_2\,(q)}\chi_q^2(\overline m_1n_2-y_2)\\
&={\begin{cases} q(q-1)\chi_q(-m_1m_4)&\text{if $\chi_q^2=1$,}\\0&\text{otherwise.}\end{cases}}
\end{align*}
So the contribution from these terms is
$$\begin{cases} 
\chi_{\stufe_2}(q^2)q^{-2}(q^2-1)\E_{\rho''}&\text{if $\chi_q=1$,}\\
\chi_{\stufe_2}(q^2)q^{k-3}\cdot q^{-k}\cdot q(q^2-1)\epsilon \E_{\rho''}
&\text{if $\chi_q\not=1$, $\chi_q^2=1$,}\\
0&\text{otherwise.}\end{cases}$$

Now say $\rank_qM'=1$; we can assume $q$ divides row 2 of $M'$.  For $q$ choices of $E$
we have $q$ dividing row 2 of $EM'$.  Then to have $q|M$, choose the unique $G$ so that
$q|m_2$; so $q\nmid m_1$ and by symmetry, $q|n_3$.  But then $q$ divides row 2 of $(M\ N)$,
so $(M,N)\not=1$.  With the 1 other choice of $E$, we have $q$ dividing row 1 of $EM'$.
Then to have $N$ integral, choose the unique $G$ so that $q|n_1$.  Then $q\nmid n_2$ (since
$(M',N')=1$), so by symmetry, $q|m_4$ (and hence $q\nmid m_3$).  Thus
\begin{align*}\sum_Y\overline\chi_q(\det N)&=\sum_Y\overline\chi_q(-n_2(n_3-m_3y_1))\\
&={\begin{cases} q^3\text{if $\chi_q=1$,}\\0&\text{otherwise.}\end{cases}}
\end{align*}
So the contribution from these terms is
$$\begin{cases} \chi_{\stufe_2}(q^2)q^{k-3}\cdot q^{-k}\cdot q^2(q-1)\E_{\rho'}
&\text{if $\chi_q=1$,}\\0&\text{otherwise.}\end{cases}$$

Finally, say $\rank_qM'=0$.  So to have $N$ integral,
for each $E$ we need to choose the unique $G$ so that
$q|n_1$.  Then $q\nmid n_2n_3$, and for each choice of $Y$ we have $N$ integral
with $\det N\equiv\det N'\ (q).$  Hence the contribution from these terms is
$\chi_{\stufe_2}(q^2)q^{k-3}\cdot q^{-k}\cdot q^3(q+1)\E_{\rho}.$

\end{proof}

With these results, we now construct a basis of simultaneous Hecke eigenforms.

\begin{thm}\label{Theorem 3.11}  Take square-free $\stufe\in\Z_+$ 
and a Dirichlet character $\chi$ modulo $\stufe$ so that $\chi(-1)=(-1)^k$.
There is a basis
$$\{\widetilde\E_{(\stufe_0,\stufe_1,\stufe_2)}:\ \stufe_0\stufe_1\stufe_2=\stufe,\ \chi_{\stufe_1}^2=1\ \}$$
for the space $\Eis_k^{(2)}(\stufe,\chi)$ of degree 2 Siegel Eisenstein series
of weight $k$, level $\stufe$, character $\chi$ so that for any prime $p$,
$\widetilde\E_{(\stufe_0,\stufe_1,\stufe_2)}|T(p)
=\lambda(p)\widetilde\E_{(\stufe_0,\stufe_1,\stufe_2)}$
and 
$\widetilde\E_{(\stufe_0,\stufe_1,\stufe_2)}|T_1(p^2)
=\lambda_1(p^2)\widetilde\E_{(\stufe_0,\stufe_1,\stufe_2)}$ where
$$\lambda(p)=
\begin{cases} 
(\chi_{\stufe_0\stufe_1}(p)p^{k-1}+\chi_{\stufe_2}(p))(\chi_{\stufe_0}(p)p^{k-2}+\chi_{\stufe_1\stufe_2}(p)) 
&\text{if $p\nmid\stufe$,}\\
\chi_{\stufe_0}(p^2)\chi_{\stufe_1}(p)p^{2k-3}
&\text{if $p|\stufe_2$,}\\
\chi_{\stufe_0\stufe_2}(p)p^{k-1}
&\text{if $p|\stufe_1$,}\\
\chi_{\stufe_1}(p)\chi_{\stufe_2}(p^2)
&\text{if $p|\stufe_0$,}\end{cases}$$
and
$$\lambda_1(p^2)
=\begin{cases} (p+1)(\chi_{\stufe_0}(p^2)p^{2k-3}+\chi(p)p^{k-3}(p-1)+\chi_{\stufe_2}(p^2))
&\text{if $p\nmid\stufe$,}\\
\chi_{\stufe_0}(p^2)(p+1)p^{2k-3}
&\text{if $p|\stufe_2$,}\\
\chi_{\stufe_0}(p^2)p^{2k-3}+\chi_{\stufe_2}(p^2)p
&\text{if $p|\stufe_1$,}\\
\chi_{\stufe_2}(p^2)(p+1)
&\text{if $p|\stufe_0$.}\end{cases}$$
\end{thm}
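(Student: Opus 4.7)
The plan is to exploit the upper-triangular structure already visible in Propositions 3.3--3.10 and then apply commutative linear algebra. The Eisenstein series $\{\E_\rho\}$, indexed by partitions $\rho=(\stufe_0,\stufe_1,\stufe_2)$ of $\stufe$ with $\chi_{\stufe_1}^2=1$, form a basis of $\Eis_k^{(2)}(\stufe,\chi)$. For any prime $p\nmid\stufe$, Propositions 3.3 and 3.4 show that every $\E_\rho$ is already a simultaneous $T(p)$- and $T_1(p^2)$-eigenform: expanding the claimed product $(\chi_{\stufe_0\stufe_1}(p)p^{k-1}+\chi_{\stufe_2}(p))(\chi_{\stufe_0}(p)p^{k-2}+\chi_{\stufe_1\stufe_2}(p))$ and using $\chi_{\stufe_1}(p)^2=1$ recovers the eigenvalue of Proposition 3.3, and $\lambda_1(p^2)$ matches Proposition 3.4 verbatim. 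Hence the remaining work is local at primes $q\mid\stufe$.

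Fix such a $q$. For each partition $\rho$, write $\rho_q\in\{0,1,2\}$ for the unique index with $q\mid\stufe_{\rho_q}$, and call the induced partition of $\stufe/q$ the non-$q$-part $\sigma$; let $V(\sigma)$ denote the span of the $\E_\rho$'s extending $\sigma$. The key observation is that the transitions listed in Propositions 3.5 (for $\rho_q=2$), 3.6 ($\rho_q=1$) and 3.7 ($\rho_q=0$), and likewise 3.8--3.10 for $T_1(q^2)$, only \emph{raise} $\rho_q$. Hence $T(q)$ and $T_1(q^2)$ preserve $V(\sigma)$, and ordering its basis by ascending $\rho_q$ their matrices on $V(\sigma)$ are upper triangular with diagonal entries exactly the claimed $\lambda(q)$ and $\lambda_1(q^2)$. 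The three possible diagonal entries of $T(q)$ carry distinct powers of $q$, namely $q^0$, $q^{k-1}$ and $q^{2k-3}$, so they are pairwise distinct; thus $V(\sigma)$ admits a $T(q)$-eigenbasis, and since $T_1(q^2)$ commutes with $T(q)$ and preserves $V(\sigma)$ the same basis simultaneously diagonalises $T_1(q^2)$. Each eigenvector is built by inverting the triangular system: it equals $\E_\rho$ plus an explicit combination of the $\E_{\rho'}$ with $\rho'_q>\rho_q$, with coefficients given by the off-diagonal entries from Propositions 3.5--3.10 divided by differences of diagonal entries.

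The Hecke operators at different primes commute, and each pair $T(q),T_1(q^2)$ with $q\mid\stufe$ acts as a scalar depending only on the non-$q$-part times a matrix affecting only the $q$-coordinate; hence the local diagonalisations at different primes are mutually compatible and may be performed independently to produce the global basis $\{\widetilde\E_\rho\}$. For primes $p\nmid\stufe$ the eigenvalues are unaffected by these finitely many row operations, so each $\widetilde\E_\rho$ has the prescribed eigenvalue at every prime. Two distinct partitions must differ at some prime $q$, where the local $T(q)$-eigenvalues differ, so the joint eigenvalue map is injective and $\widetilde\E_\rho$ is uniquely determined up to scaling. The main obstacle is purely bookkeeping: confirming from Propositions 3.5--3.10 that the transitions genuinely only raise $\rho_q$ so the local matrices are upper triangular, and checking case by case that the diagonal coefficients match the $\lambda(q),\lambda_1(q^2)$ asserted in the theorem.
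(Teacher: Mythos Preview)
Your argument is correct and takes a genuinely different route from the paper. The paper proceeds constructively: for each prime $q\mid\stufe$ it writes down explicit constants $a_\rho(q),b_\rho(q),c_\rho(q)$, extends them multiplicatively over divisors of $\stufe_0$ and $\stufe_1$, and defines
\[
\widetilde\E_\rho=\sum_{Q_0Q_0'\mid\stufe_0,\ Q_1\mid\stufe_1} a_\rho(Q_0)\,b_\rho(Q_0')\,c_\rho(Q_1)\,\E_{(\stufe_0/(Q_0Q_0'),\,Q_0\stufe_1/Q_1,\,Q_0'Q_1\stufe_2)},
\]
then checks directly from Propositions~3.5--3.10 that this is a simultaneous eigenvector. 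Your argument instead invokes commutative linear algebra: the Hecke operators commute, each $T(q)$ and $T_1(q^2)$ is diagonalizable on $\Eis$ (being block upper triangular with distinct diagonal entries on each $V(\sigma)$), hence they are simultaneously diagonalizable; upper-triangularity with respect to the common partial order on partitions then forces the joint eigenvalues to be exactly the diagonal tuples, which are distinct. One small correction: your claim that $T(q)$ acts on $V(\sigma)$ as a \emph{scalar depending on $\sigma$} times a \emph{fixed} matrix is not quite accurate---the character factors in Propositions~3.5--3.10 enter each row differently, so the matrix on $V(\sigma)$ genuinely varies with $\sigma$---but this is immaterial, since commutativity of the Hecke algebra alone already gives the compatibility you need. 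What the paper's approach buys is the explicit multiplicative shape of the coefficients in $\widetilde\E_\rho$ (used later in Theorem~3.12 and the remarks on Fourier coefficients); what yours buys is a cleaner existence proof that sidesteps solving for $a_\rho,b_\rho,c_\rho$.
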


\begin{proof} Fix a partition $\rho=(\stufe_0,\stufe_1,\stufe_2)$ of $\stufe$.
For $q$ an odd prime dividing $\stufe$, set $\epsilon=\left(\frac{-1}{q}\right)$, and set
\begin{align*}
a_{\rho}(q)&={\begin{cases} -\frac{\chi_{\stufe_1\stufe_2}(q)q^{-1}(q-1)}{\chi_{\stufe_0}(q)q^{k-1}-\chi_{\stufe_1\stufe_2}(q)}
&\text{if $\chi_q=1$,}\\0&\text{otherwise;}\end{cases}}\\
b_{\rho}(q)&={\begin{cases} -\frac{ \chi_{\stufe_2}(q^2)
q^{-1}(q-1)(\chi_{\stufe_0}(q)q^{k-3}-\chi_{\stufe_1\stufe_2}(q))}
{(\chi_{\stufe_0}(q)q^{k-1}-\chi_{\stufe_1\stufe_2}(q))(\chi_{\stufe_0}(q^2)q^{2k-3}-\chi_{\stufe_2}(q^2))}
&\text{if $\chi_q=1$,}\\
-\frac{\epsilon \chi_{\stufe_2}(q^2) q^{-2}(q-1)}
{\chi_{\stufe_0}(q^2)q^{2k-3}-\chi_{\stufe_2}(q^2)}&\text{if $\chi_q\not=1$, $\chi_q^2=1$,}\\0&\text{otherwise;}
\end{cases}}\\
c_{\rho}(q)&={\begin{cases} -\frac{\chi_{\stufe_2}(q)q^{-2}(q^2-1)}{\chi_{\stufe_0\stufe_1}(q)q^{k-2}-\chi_{\stufe_2}(q)}
&\text{if $\chi_q=1$,}\\0&\text{otherwise.}\end{cases}}
\end{align*}
Extend these functions multiplicatively, and set
$$\widetilde\E_{\rho}=\sum_{{Q_0Q_0'|\stufe_0}\atop{Q_1|\stufe_1}}
a_{\rho}(Q_0)b_{\rho}(Q_0')c_{\rho}(Q_1)\E_{(\stufe_0/(Q_0Q_0'),Q_0\stufe_1/q,Q_0'Q_1\stufe_2)}.$$
Since $a_{\rho}(Q_0)=0$ unless $\chi_{Q_0}=1$, $b_{\rho}(Q_0')=0$ unless $\chi_{Q_0'}^2=1$,
and $c_{\rho}(Q_1)=0$ unless $\chi_{Q_1}=1$, Propositions 3.3, 3.4, 3.5, and 3.8 show that
$\widetilde\E_{\rho}$ is an eigenform for all $T(p)$, $T_1(p^2)$ where $p\nmid \stufe_0\stufe_1$,
with eigenvalues as claimed in the theorem.
For a prime $q|\stufe_1$, write
\begin{align*}
\widetilde\E_{\rho}
&\quad=
\sum_{{Q_0Q_0'|\stufe_0}\atop{Q_1|\stufe_1/q}}a_{\rho}(Q_0)b_{\rho}(Q_0')c_{\rho}(Q_1)\\
&\quad\cdot \left(\E_{(\stufe_0/(Q_0Q_0'),Q_0\stufe_1/q,Q_0'Q_1\stufe_2)}
+c_{\rho}(q) \E_{(\stufe_0/(Q_0Q_0'),Q_0\stufe_1/(qQ_1),qQ_0'Q_1\stufe_2)}\right).
\end{align*}
Propositions 3.5, 3.6, 3.8 and 3.9 show that $\widetilde\E_{\rho}$ is an eigenform for $T(q)$
and $T_1(q^2)$, with eigenvalues
as claimed.  
For $q|\stufe_0$, using Propositions 3.5 through 3.10 and a similar rearrangement of the sum defining
$\widetilde\E_{\rho}$, we find $\widetilde\E_{\rho}$ is an eigenform for $T(q)$ and $T_1(q^2)$, with
eigenvalues as claimed. 
\end{proof}

Note that for $q$ a prime dividing $\stufe$ with $\chi_q^2=1$, 
Propositions 3.6, 3.7, 3.9 and 3.10  give us Hecke relations among Eisenstein series.  In particular,
when $\cond\chi^2<\stufe$ we can generate some of the Eisenstein series from $\E_{(\stufe,1,1)}$.  To see this,
let $q$ be a prime dividing $\stufe$ so that $\chi_q^2=1$.  If $\chi_q=1$, set
$c(q)=\frac{q^2}{(q-1)(\chi_{\stufe/q}(q)q^k-1)}$,
$$S_1(q)=c(q)\left[T_1(q^2)-q^{-1}(q+1)T(q)-q^{-1}(q^2-1)\right]$$
and 
$$S_2(q)=c(q)\left[(\chi_{\stufe/q}(q)q^{k-1}+1)T(q)-T_1(q^2)-q(\chi_{\stufe/q}(q)q^{k-2}-1)\right];$$
if $\chi_q\not=1$, set
$$S_2(q)=\frac{\left(\frac{-1}{q}\right)q^2}{(q-1)}\left[T(q)-1\right].$$
Extending these maps multiplicatively, we have the following.

\begin{thm}\label{Theorem 3.12}  Suppose $\stufe$ is square-free,
$\stufe_0\stufe_1\stufe_2=\stufe$, $\chi_{\stufe_1}=1$,
and $\chi_{\stufe_2}^2=1$.  Then 
$$\E_{(\stufe_0,\stufe_1,\stufe_2)}
=\E_{(\stufe,1,1)}|S_1(\stufe_1)S_2(\stufe_2).$$
In particular, when $\chi=1$,
$$\{\E_{(\stufe,1,1)}|S_1(\stufe_1)S_2(\stufe_2):\ \stufe_1\stufe_2|\stufe\ \}$$
is a basis for $\Eis_k^{(2)}(\stufe,1)$.
\end{thm}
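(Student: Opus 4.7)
The plan is to handle the first identity in two passes. For the base case of a single prime, with $\rho = (\stufe, 1, 1)$ and a prime $q \mid \stufe$, I would feed $\E_\rho$ into Propositions \ref{proposition 3.7} and \ref{proposition 3.10}: because $\stufe_1 = \stufe_2 = 1$, the prefactors $\chi_{\stufe_1}(q)$ and $\chi_{\stufe_2}(q^2)$ equal $1$, so the formulas express $\E_\rho|T(q)$ and $\E_\rho|T_1(q^2)$ as explicit linear combinations of $\E_\rho$, $\E_{(\stufe/q, q, 1)}$, and $\E_{(\stufe/q, 1, q)}$. When $\chi_q = 1$ this is a $2 \times 2$ linear system in the coefficients of the two latter forms, and the constants defining $c(q), S_1(q), S_2(q)$ are arranged precisely to invert it, yielding $\E_\rho|S_1(q) = \E_{(\stufe/q, q, 1)}$ and $\E_\rho|S_2(q) = \E_{(\stufe/q, 1, q)}$. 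When $\chi_q$ has exact order $2$, Proposition \ref{proposition 3.7} gives $\E_\rho|(T(q) - 1) = \epsilon q^{-2}(q-1)\E_{(\stufe/q, 1, q)}$, and the alternative scaling in the definition of $S_2(q)$ isolates $\E_{(\stufe/q, 1, q)}$.

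The second stage is the iteration. The scalar defining $S_1(q)$ or $S_2(q)$ depends only on $q$, $\stufe$, $\chi$, and $k$, not on the partition of the Eisenstein series being acted upon. I would show by induction that for any intermediate form $\E_{(\stufe_0', \stufe_1', \stufe_2')}$ with $\stufe_1' \mid \stufe_1$, $\stufe_2' \mid \stufe_2$, and any prime $q \mid \stufe_0'$, the hypotheses of Propositions \ref{proposition 3.7} and \ref{proposition 3.10} continue to apply with trivial prefactors: $\chi_{\stufe_1'}(q) = 1$ because $\chi_{\stufe_1'}$ is a factor of the trivial $\chi_{\stufe_1}$, and $\chi_{\stufe_2'}(q^2) = 1$ because $\chi_{\stufe_2'}^2$ is a factor of $\chi_{\stufe_2}^2 = 1$. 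Hence the identical linear algebra as in the base case yields $\E_{(\stufe_0', \stufe_1', \stufe_2')}|S_1(q) = \E_{(\stufe_0'/q,\, q\stufe_1',\, \stufe_2')}$ and $\E_{(\stufe_0', \stufe_1', \stufe_2')}|S_2(q) = \E_{(\stufe_0'/q,\, \stufe_1',\, q\stufe_2')}$. Since Hecke operators at distinct primes commute, the product $S_1(\stufe_1)\,S_2(\stufe_2)$ is unambiguous, and the induction --- moving the primes of $\stufe_1$ out of the first slot into the middle, then the primes of $\stufe_2$ out of the first slot into the third --- delivers $\E_{(\stufe_0, \stufe_1, \stufe_2)}$.

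The basis claim for $\chi = 1$ then follows immediately: the hypotheses $\chi_{\stufe_1} = 1$ and $\chi_{\stufe_2}^2 = 1$ are automatic for every factorization $\stufe_0 \stufe_1 \stufe_2 = \stufe$, so the first part of the theorem gives $\E_{(\stufe, 1, 1)}|S_1(\stufe_1)\,S_2(\stufe_2) = \E_{(\stufe_0, \stufe_1, \stufe_2)}$ as $(\stufe_1, \stufe_2)$ ranges over coprime divisors with $\stufe_1 \stufe_2 \mid \stufe$. These are precisely the cusp-indexed Eisenstein series, which are known to be linearly independent and to span $\Eis_k^{(2)}(\stufe, 1)$ by the cusp count; the generated set is therefore a basis.

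The main obstacle I anticipate is simply the bookkeeping in the base case: verifying that the linear combinations defining $c(q), S_1(q), S_2(q)$ collapse to exactly $\E_{(\stufe/q, q, 1)}$ and $\E_{(\stufe/q, 1, q)}$ with no stray scalar. Once this cancellation is pinned down, the induction is routine because the scalar $\chi_{\stufe/q}(q)$ appearing in $c(q)$ is fixed by $\chi$ and $q$ and does not drift as one passes to intermediate partitions.
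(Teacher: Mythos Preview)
Your proposal is correct and follows essentially the same route as the paper's own proof: use Propositions~\ref{proposition 3.7} and~\ref{proposition 3.10} to solve for $\E_{(\stufe_0/q,q\stufe_1,\stufe_2)}$ and $\E_{(\stufe_0/q,\stufe_1,q\stufe_2)}$ in terms of $\E_{(\stufe_0,\stufe_1,\stufe_2)}$ when $q\mid\stufe_0$, then induct. The paper's proof is terser than yours, but your added observation---that the prefactors $\chi_{\stufe_1'}(q)$ and $\chi_{\stufe_2'}(q^2)$ remain equal to $1$ at every intermediate stage, so the linear system being inverted never changes---is exactly the content of the ``simple induction argument'' the paper leaves implicit.
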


\begin{proof}  When $\chi_q=1$, we use Propositions 3.7, 3.10 to solve for $\E_{(\stufe_0/q,q\stufe_1,\stufe_2)}$
and for $\E_{(\stufe_0/q,\stufe_1,q\stufe_2)}$ in terms of $\E_{(\stufe_0,\stufe_1,\stufe_2)}$, presuming $q|\stufe_0$.
When $\chi_q\not=1$ we use Proposition 3.7 to get $\E_{(\stufe_0/q,\stufe_1,q\stufe_2)}$ in terms of $\E_{(\stufe_0,\stufe_1,\stufe_2)}$, 
again presuming $q|\stufe_0$.  Now a simple induction argument yields the result.

\end{proof}

\smallskip\noindent{\bf Remarks.}
(1) When $f$ is a Siegel modular form 
with Fourier coeffients
$a(T)$, we have $a(\,^tGTG)=a(T)$ for all $G\in GL_2(\Z)$ if $k$ is even, and for all $G\in SL_2(\Z)$ if $k$ is odd.  Thus we can consider the Fourier series
of $f$ to be supported on lattices $\Lambda$ equipped with a positive, semi-definite quadratic
form given by $T$ (relative to some basis), with $\Lambda$ oriented if $k$ is odd; for such $\Lambda$ we set $a(\Lambda)=a(T)$.  Then by Theorem 6.1 of [4],
with $QP$ square-free, the $\Lambda$th coefficient of $f|T_1(Q^2)T(P)$ is
$$\sum_{{Q\Lambda\subset\Omega\subset\Lambda}\atop{[\Lambda:\Omega]=Q}}a\left(\Omega^{P}\right),$$
where $\Omega^{P}$ denotes the lattice $\Omega$ whose quadratic form has been scaled
by $P$.  (Note that an orientation on $\Lambda$ induces an orientation on $\Omega\subset\Lambda$.)

(2) With $\E$ the degree 2 Eisenstein series of level 1, we have
$$\E(\tau)=\sum_{\stufe_0\stufe_1\stufe_2=\stufe}\E_{(\stufe_0,\stufe_1,\stufe_2)}.$$
Thus formulas for the Fourier coefficients of $\E$ together with our Hecke relations can be used to generate Fourier coefficients
of all degree 2, square-free level $\stufe$ Eisenstein series with trivial character.


\begin{thebibliography}{0}

\bibitem{B} S. B\"ocherer, `` \"Uber die Fourierkoeffizienten der Siegelschen Eisensteinreihen"
\textit{ Manuscripta Math. } 45 (1984), 273-288.



\bibitem {CK} Y. Choie, W. Kohnen, ``Fourier coefficients of Siegel-Eisenstein series of odd genus"
\textit{ J. Math. Anal. Appl. } 374 (2011), 1-7.


\bibitem {F} E. Freitag, ``Siegel Eisenstein series of arbitrary level and theta series"
\textit{ Abh. Math. Sem. Univ. Hamburg } 66 (1996), 229-247.



\bibitem {HW} J.L. Hafner, L.H. Walling, ``Explicit action of Hecke operators
on Siegel modular forms" \textit{ J. Number Theory } 93 (2002), 34-57.


\bibitem {Kat1} H. Katsurada, ``An explicit formula for the Fourier coefficients of Siegel-Eisenstein series
of degree 3" \textit{ Nagoya Math. J. } 146 (1997), 199-223.


\bibitem {Kat2} H. Katsurada, ``An explicit formula for Siegel series"
\textit{ Am. J. Math. } 121(2) (1999), 415-452.


\bibitem {Koh} W. Kohnen, ``Lifting modular forms of half-integral weight to Siegel modular forms
of even genus" \textit{ Math. Ann. } 322 (2002), 787-809.


\bibitem {Maass1} H. Maass, ``Die Fourierkoeffizienten der Eisensteinreihen zweiten Grades"
\textit{ Mat.-Fys. Medd. Danske Vid. Selsk. } 34 (7),  (1964), 25 p.


\bibitem {Maass2} H. Maass, ``\"Uber die Fourierkoeffizienten der Eisensteinreihen zweiten Grades"
\textit{ Mat.-Fys. Medd. Danske Vid. Selsk. } 38 (14), (1972), 13 p.


\bibitem {Miz} Y. Mizuno, ``An explicit arithmetic formula for the Fourier coefficients of Siegel-Eisenstein
series of degree two and square-free odd levels"
\textit{ Math. Z. } 263 (2009), 837-860.



\bibitem{Tak} S. Takemori, ``$p$-adic Siegel-Eisenstein series of degree 2"
\textit{ (preprint)}.





\end{thebibliography}
\end{document}